\documentclass[11pt]{amsart}

\usepackage{amssymb,amscd,amsmath,hyperref,color,enumerate}
\usepackage[all,cmtip]{xy}
\usepackage{mathrsfs}
\usepackage{lipsum}
\usepackage{nicematrix,tikz, ifthen}
\usepackage{scalerel}
\usepackage{ascii}
\makeatletter

\makeatother

\usepackage[margin=3cm]{geometry}

\newcommand{\N}{{\ensuremath{\mathbb{N}}}}
\newcommand{\Z}{{\ensuremath{\mathbb{Z}}}}
\newcommand{\Q}{{\ensuremath{\mathbb{Q}}}}
\newcommand{\C}{{\ensuremath{\mathbb{C}}}}

\newcommand{\F}{{\ensuremath{\mathbb{F}}}}
\newcommand{\K}{{\ensuremath{\mathbb{K}}}}
\newcommand{\BB}{{\ensuremath{\mathrm{B}}}}

\usepackage[normalem]{ulem}

\newcommand{\stkout}[1]{\ifmmode\text{\sout{\ensuremath{#1}}}\else\sout{#1}\fi}

\DeclareMathOperator{\supp}{supp}

\DeclareMathOperator{\chr}{char}

\DeclareMathOperator{\idem}{Idem}


\newcommand{\ca}[1]{\ensuremath{\mathcal{#1}}}

\newtheorem{proposition}{Proposition}[section]
\newtheorem{lemma}[proposition]{Lemma}

\newtheorem{theorem}[proposition]{Theorem}

\newtheorem{corollary}[proposition]{Corollary}
\theoremstyle{definition}

\newtheorem{example}[proposition]{Example}

\numberwithin{equation}{section}

\newlength{\leftstackrelawd}
\newlength{\leftstackrelbwd}
\def\leftstackrel#1#2{\settowidth{\leftstackrelawd}%
	{${{}^{#1}}$}\settowidth{\leftstackrelbwd}{$#2$}%
	\addtolength{\leftstackrelawd}{-\leftstackrelbwd}%
	\leavevmode\ifthenelse{\lengthtest{\leftstackrelawd>0pt}}%
	{\kern-.5\leftstackrelawd}{}\mathrel{\mathop{#2}\limits^{#1}}}

\begin{document}
	
	\title[Classification of Jordan multiplicative maps on matrix algebras]{Classification of Jordan multiplicative maps on matrix algebras}
	
	\author{Ilja Gogi\'{c}, Mateo Toma\v{s}evi\'{c}}
	
	\address{I.~Gogi\'c, Department of Mathematics, Faculty of Science, University of Zagreb, Bijeni\v{c}ka 30, 10000 Zagreb, Croatia}
	\email{ilja@math.hr}
	
	\address{M.~Toma\v{s}evi\'c, Department of Mathematics, Faculty of Science, University of Zagreb, Bijeni\v{c}ka 30, 10000 Zagreb, Croatia}
	\email{mateo.tomasevic@math.hr}
	
	\thanks{We thank the referees for reviewing the manuscript
		and for their comments and suggestions.}
	
	\keywords{matrix algebra, Jordan multiplicative map, Jordan homomorphism, automatic additivity}

	\subjclass[2020]{47B49, 16S50, 16W20, 20M25}
	
	\date{\today}
	
	\begin{abstract}
		Let $M_n(\mathbb{F})$ be the algebra  of $n \times n$ matrices over a field $\mathbb{F}$ of characteristic not equal to  $2$. If $n\ge 2$, we show that an arbitrary map $\phi : M_n(\mathbb{F}) \to M_n(\mathbb{F})$ is Jordan multiplicative, i.e.\ it satisfies the functional equation
		$$
		\phi(XY+YX)=\phi(X)\phi(Y)+\phi(Y)\phi(X), \quad \text{for all } X,Y \in M_n(\mathbb{F})
		$$
		if and only if one of the following holds: either $\phi$ is constant, equal to $P/2$ for some idempotent $P \in M_n(\mathbb{F})$, or there exists an invertible matrix $T \in M_n(\mathbb{F})$ and a ring monomorphism $\omega: \mathbb{F} \to \mathbb{F}$ such that  
		$$
		\phi(X)=T\omega(X)T^{-1} \quad \text{ or } \quad \phi(X)=T\omega(X)^tT^{-1}, \quad \text{for all } X \in M_n(\mathbb{F}),
		$$
		where $\omega(X)$ denotes the matrix obtained by applying $\omega$ entrywise to $X$. In particular, any Jordan multiplicative map $\phi : M_n(\mathbb{F}) \to M_n(\mathbb{F})$ with $\phi(0)=0$ is automatically additive. The analogous characterization fails when $\mathbb{F}$ has characteristic $2$. 
	\end{abstract}
	
	\maketitle
	
	\section{Introduction}
	An interesting class of problems in algebra revolves around exploring the interaction between the multiplicative and additive structures of rings and algebras. A landmark result in this area, due to Martindale \cite[Corollary]{Martindale}, states that any bijective multiplicative map from a prime ring containing a nontrivial idempotent onto an arbitrary ring is necessarily additive, and thus a ring isomorphism. Another fundamental result by Jodeit and Lam in \cite{JodeitLam} provides a classification of non-degenerate multiplicative self-maps on the matrix rings $M_n(\ca{R})$ over a principle ideal domain $\ca{R}$ (i.e.\ maps that are not identically zero on all zero-determinant matrices).  Specifically, they show that for each such map $\phi:M_n(\ca{R})\to M_n(\ca{R})$  one of the following holds: either there exists a nonzero idempotent matrix $P \in M_n(\ca{R})$ such that the map $\phi-P$ is multiplicative and degenerate, or there exists an invertible matrix $T \in M_n(\ca{R})$ and a ring endomorphism $\omega$ of $\ca{R}$ such that
	$$
	\phi(X)=T\omega(X)T^{-1} \qquad \text{or} \qquad  \phi(X)=T\omega(X)^*T^{-1}, \quad  \text{for all }  X \in M_n(\ca{R}),
	$$
	where $\omega(X)$ denotes the matrix obtained by applying $\omega$ entrywise to $X$, and $(\cdot)^*$ represents the corresponding cofactor matrix.  In particular, all bijective multiplicative self-maps on $M_n(\ca{R})$ are automatically additive and, consequently, ring automorphisms of $M_n(\ca{R})$. More recently, \v{S}emrl in \cite{Semrl2} provided an extensive classification of (non-degenerate) multiplicative self-maps on matrix rings over arbitrary division rings. Additionally, in \cite{Semrl1}, \v{S}emrl described the structure of multiplicative bijective maps on standard operator algebras, which are subalgebras of bounded linear maps on a complex Banach space that contain all finite-rank operators.

	\smallskip
	
	On the other hand, any associative ring (algebra) $\ca{A}$ naturally inherits the structure of a Jordan ring (algebra), via the \emph{Jordan product} defined by
	$$
	x \diamond y:= xy+yx, \quad  \text{for all }  x,y \in \ca{A}.
	$$
	When working with algebras $\ca{A}$ over a field $\F$ of characteristic not equal to $2$,  it is often more convenient to use the \emph{normalized Jordan product}, defined by
	\begin{equation}\label{eq:normalizedJP}
		x \circ y:= \frac{1}{2}(xy+yx), \quad \text{for all }  x,y \in \ca{A}.
	\end{equation}
	The Jordan structure of algebras plays an important role in various areas, especially in the mathematical foundations of quantum mechanics (see e.g.\ \cite{Strocchi}). The corresponding  morphisms between rings (algebras) $\ca{A}$ and $\ca{B}$ are called \emph{Jordan homomorphisms}, which are additive (linear) maps $\psi : \ca{A} \to \ca{B}$ satisfying
	\begin{equation}\label{eq:Jordan_morphism_std}
		\phi(x \diamond y)=\phi(x) \diamond \phi(y), \quad  \text{for all }  x,y \in \ca{A}.
	\end{equation}
	For $2$-torsion-free rings (algebras), condition \eqref{eq:Jordan_morphism_std} is equivalent to the property that $\phi$ preserves squares, i.e.
	\begin{equation*}
		\phi(x^2) = \phi(x)^2, \quad  \text{for all }  x \in \ca{A},
	\end{equation*}
	and, trivially, to the condition
	\begin{equation}\label{eq:Jordan_morphism_norm}
		\phi(x \circ y)=\phi(x) \circ \phi(y), \quad \text{for all }  x,y \in \ca{A},
	\end{equation}
	when both $\ca{A}$ and $\ca{B}$ are $\F$-algebras with $\chr(\F)\neq 2$. The most notable examples of Jordan homomorphisms are multiplicative and antimultiplicative maps. In fact, one of the central problems in Jordan theory, initially addressed by Jacobson and Rickart in \cite{JacobsonRickart} (see also \cite{Herstein, Smiley})
	is to determine the conditions on rings (algebras) that guarantee that any (typically  surjective) Jordan homomorphism between rings (algebras) is either multiplicative, antimultiplicative, or, more generally, a suitable combination of such maps. For more recent developments on this topic, we refer to Bre\v sar's paper \cite{Bresar} and the references therein.
	
	\smallskip
	
	Furthermore, when both $\ca{A}$ and $\ca{B}$ are standard operator algebras, with $\dim \ca{A}>1$,  Molnar classifies all bijective maps $\phi : \ca{A} \to \ca{B}$ satisfying \eqref{eq:Jordan_morphism_norm} in \cite[Theorem~1]{Molnar}. An important consequence of Molnar's result is that all such maps are automatically additive. Moreover, the same classification result  applies to bijective maps $\phi$ satisfying  \eqref{eq:Jordan_morphism_std}. Indeed, as noted at the beginning of the proof of \cite[Theorem~3.1]{GogicTomasevic2}, if $\phi: \ca{A}\to \ca{B}$ is $\diamond$-preserving (where $\ca{A}$ and $\ca{B}$ are any $\F$-algebras over a field $\F$ with $\chr(\F)\ne 2$), then the map $\psi : \ca{A} \to \ca{B}$ defined by
	\begin{equation}\label{eq:supsitution Jmultiplicative}
		\psi(x):=2 \phi\left(\frac{x}{2}\right), \quad \text{for all } x \in \ca{A}
	\end{equation}
	is evidently  $\circ$-preserving. Referring back to Molnar’s classification theorem \cite[Theorem~1]{Molnar}, the finite-dimensional variant asserts that any bijective map $\phi : M_n(\C)\to M_n(\C)$, $n \ge 2$, satisfying \eqref{eq:Jordan_morphism_norm} (or \eqref{eq:Jordan_morphism_std}) takes the form
	$$
	\phi(X)=T\omega(X)T^{-1} \quad \text{ or } \quad \phi(X)=T\omega(X)^tT^{-1}, \quad  \text{for all } X \in M_n(\C),
	$$
	where $T\in M_n(\C)$ is an invertible matrix and $\omega$ is a ring automorphism of $\C$, with  $(\cdot)^t$ denoting matrix transposition. In our recent work \cite{GogicTomasevic2}, the authors extended both \cite[Corollary]{JodeitLam} and the finite-dimensional version of \cite[Theorem~1]{Molnar} to the context of injective maps on structural matrix algebras (SMAs), which are subalgebras of $M_n(\mathbb{C})$ containing all diagonal matrices (for a simple characterization of SMAs see \cite[Proposition 3.1]{GogicTomasevic1}). For additional variants and generalizations of Molnar's result, particularly those related to the automatic additivity of bijective maps satisfying \eqref{eq:Jordan_morphism_std} or \eqref{eq:Jordan_morphism_norm}, we refer the reader to \cite{Ji, JiLiu, LiXiao, Lu} and the references therein.
	
	\smallskip
	
	The objective of this paper is to present a complete classification of Jordan multiplicative self-maps on matrix algebras. In contrast to the more intricate Jodeit-Lam's classification of the corresponding multiplicative self-maps, the Jordan multiplicative case exhibits a notably simpler structure:
	\begin{theorem}\label{thm:main}
		Let $\mathbb{F}$ be a field with $\chr(\F)\ne 2$ and let $\phi : M_n(\F) \to M_n(\F)$, $n \ge 2$, be an arbitrary map  satisfying either  \eqref{eq:Jordan_morphism_std} or \eqref{eq:Jordan_morphism_norm}. Then, one of the following holds:
		\begin{itemize}
			\item[(a)] $\phi$ is constant, in which case there exists an idempotent $P \in M_n(\mathbb{F})$ such that $\phi \equiv P$ if $\phi$ is $\circ$-preserving, or
			$\phi \equiv P/2$ if $\phi$ is $\diamond$-preserving.
			\item[(b)] $\phi$ is an additive map, and thus a Jordan ring monomorphism of $M_n(\F)$. Consequently, there exists an invertible matrix $T \in M_n(\F)$ and a ring monomorphism $\omega:\F \to \F$ such that
			\begin{equation}\label{eq:form of phi}
				\phi(X)=T\omega(X)T^{-1} \quad \text{ or } \quad \phi(X)=T\omega(X)^tT^{-1}, \quad \text{for all } X \in M_n(\F).
			\end{equation}
		\end{itemize}
	\end{theorem}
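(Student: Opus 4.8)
The plan is to reduce the $\diamond$-preserving case to the $\circ$-preserving one through the substitution \eqref{eq:supsitution Jmultiplicative}: if $\phi$ satisfies \eqref{eq:Jordan_morphism_std}, then $\psi(X):=2\phi(X/2)$ satisfies \eqref{eq:Jordan_morphism_norm}, and the two conclusions correspond under $P \leftrightarrow P/2$, so I assume throughout that $\phi$ is $\circ$-preserving. Putting $X=Y$ gives $\phi(X^2)=\phi(X)^2$, so $\phi$ sends idempotents to idempotents; in particular $P:=\phi(0)$ is idempotent. Applying $\circ$-preservation to the identity $0=0\circ X$ yields $P\phi(X)+\phi(X)P=2P$ for every $X$, and reading this through the Peirce decomposition of $M_n(\F)$ relative to $P$ forces $P\phi(X)P=P$ and $P\phi(X)(I-P)=(I-P)\phi(X)P=0$. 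Hence $\phi(X)=P+\tilde\phi(X)$, where $\tilde\phi(X):=(I-P)\phi(X)(I-P)$ is valued in the corner $\mathcal{C}:=(I-P)M_n(\F)(I-P)$. Since $P\tilde\phi(\cdot)=\tilde\phi(\cdot)P=0$, a short computation shows $\tilde\phi$ is again $\circ$-preserving with $\tilde\phi(0)=0$, so everything reduces to analysing $\circ$-preserving maps that vanish at $0$.

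The core of the argument, and the step I expect to be the main obstacle, is to establish automatic additivity: \emph{every $\circ$-preserving map $\psi$ on $M_n(\F)$ with $\psi(0)=0$ is additive.} I would exploit the matrix units $E_{ij}$. The diagonal images $e_i:=\psi(E_{ii})$ are idempotents, and from $E_{ii}\circ E_{jj}=0$ together with $\chr(\F)\neq2$ (for idempotents $e,f$, the relation $ef+fe=0$ forces $ef=fe=0$) they are pairwise orthogonal. One then studies the images of the off-diagonal units and of the idempotents $E_{ii}+\lambda E_{ij}$, using the Jordan relations $E_{ii}\circ E_{ij}=\tfrac12 E_{ij}$ and $E_{ij}\circ E_{ji}=\tfrac12(E_{ii}+E_{jj})$, to show that $\psi$ respects the Peirce grading of $M_n(\F)$ determined by $\{E_{ii}\}$. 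The delicate task is to promote this to full additivity $\psi(X+Y)=\psi(X)+\psi(Y)$: first for summands supported in a single Peirce slot, then in general. This is precisely where $n\ge2$ (the presence of genuine off-diagonal units) and the invertibility of $2$ are indispensable, and where the characteristic-$2$ breakdown mentioned in the abstract originates.

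Granting additivity, $\tilde\phi$ is an additive Jordan homomorphism, so $\ker\tilde\phi$ is a Jordan ideal of $M_n(\F)$. Since $M_n(\F)$ is Jordan-simple when $\chr(\F)\ne2$, either $\tilde\phi\equiv0$ or $\tilde\phi$ is injective. In the first case $\phi\equiv P$ is constant, which is alternative (a). In the second, $\tilde\phi(E_{11}),\dots,\tilde\phi(E_{nn})$ are $n$ \emph{nonzero} pairwise orthogonal idempotents lying in $\mathcal{C}\cong M_{n-r}(\F)$ with $r=\operatorname{rank}P$; their ranks sum to at least $n$ yet cannot exceed $n-r$, forcing $r=0$, i.e.\ $P=0$ and $\phi=\tilde\phi$. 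Thus $\phi$ is an injective additive Jordan homomorphism of $M_n(\F)$ into itself, that is, a Jordan ring monomorphism.

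Finally, I would invoke the classical description of additive Jordan homomorphisms of matrix rings in the spirit of Jacobson--Rickart: such a map is a sum of a ring homomorphism and a ring anti-homomorphism with orthogonal ranges, and injectivity together with the simplicity of the image rules out mixed behaviour, leaving a ring homomorphism or a ring anti-homomorphism. Every ring (anti)homomorphism of $M_n(\F)$ is in turn of the form $X\mapsto T\omega(X)T^{-1}$ (respectively $X\mapsto T\omega(X)^tT^{-1}$) for an invertible $T\in M_n(\F)$ and a ring monomorphism $\omega$ of $\F$ — the monomorphism arising because $\phi$ is not assumed surjective — which is exactly \eqref{eq:form of phi} and establishes alternative (b).
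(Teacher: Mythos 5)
Your reduction steps are sound and essentially coincide with the paper's: passing from $\diamond$ to $\circ$ via \eqref{eq:supsitution Jmultiplicative}, and the Peirce analysis of $P\phi(X)+\phi(X)P=2P$ with $P=\phi(0)$, which yields $\phi(X)=P+\tilde\phi(X)$ with $\tilde\phi$ valued in the corner $(I-P)M_n(\F)(I-P)\cong M_{n-r}(\F)$ and $\tilde\phi(0)=0$, is precisely the paper's \eqref{eq:X to Y}. Note, however, that the paper disposes of the case $r\ge 1$ immediately: by Lemma \ref{le:basic properties II}(d) --- whose proof needs no additivity, only that a nonzero $\circ$-preserving map with $\phi(0)=0$ cannot annihilate any nonzero matrix (Lemma \ref{le:zero map}) and preserves orthogonality of idempotents --- a nonzero such map into $M_{n-r}$ would force $n-r\ge n$; hence $\tilde\phi\equiv 0$ and $\phi\equiv P$. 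Your plan defers this dichotomy until after additivity is established, which creates an awkward circularity (your additivity sketch is formulated for self-maps and tacitly assumes the idempotents $\psi(E_{ii})$ are nonzero), but this is repairable and not the real problem.

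The real problem is that the heart of the theorem --- the claim you yourself isolate as ``the main obstacle,'' namely that every $\circ$-preserving map with $\psi(0)=0$ is additive --- is announced, not proved. You name the correct ingredients (pairwise orthogonal idempotent images of the $E_{ii}$, the relations $E_{ii}\circ E_{ij}=\tfrac12 E_{ij}$ and $E_{ij}\circ E_{ji}=\tfrac12(E_{ii}+E_{jj})$, the idempotents $E_{ii}+\lambda E_{ij}$), but the passage ``the delicate task is to promote this to full additivity: first for summands supported in a single Peirce slot, then in general'' is exactly where all the work lies, and no argument is supplied. In the paper this step occupies Lemma \ref{le:h exists} (the scalar map $\omega$ is constructed and shown additive by computing $\phi(E_{ii}+xE_{ij})\circ\phi(E_{jj}+yE_{ij})$, which gives $\omega(\tfrac{x+y}{2})=\omega(\tfrac{x}{2})+\omega(\tfrac{y}{2})$), Lemma \ref{le:contains entire CxC} together with the transitive function $g$ and an inner automorphism (to rule out a mixture of transposed and untransposed behaviour on different matrix units and to normalize $\phi(E_{ij})=E_{ij}$ for all $i,j$), and Lemma \ref{le:preserves triple product}, the identity $\phi(PXP)=\phi(P)\phi(X)\phi(P)$, which is what finally pins down each entry: $\phi(X)_{ij}=\omega(X_{ij})$. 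Observe that the paper's logical order is the reverse of yours: additivity is not an intermediate abstract fact from which \eqref{eq:form of phi} is deduced via Jacobson--Rickart (a legitimate endgame, and genuinely different from the paper's self-contained one, but only available once additivity is known); rather, the explicit form $\phi=\omega(\cdot)$ up to conjugation is derived directly, and additivity is read off from the formula. As it stands, your proposal reduces the theorem to its hardest step and leaves that step open.
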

	The proof of Theorem \ref{thm:main} will be presented in Section \S\ref{sec:main}. The approach follows a similar strategy to that in \cite{GogicTomasevic2}, relying entirely on elementary linear algebra techniques. Let us also highlight that a related variant of Theorem  \ref{thm:main}, concerning Jordan multiplicative self-maps on the real subspace of self-adjoint matrices in $M_n(\C)$ (with $n \ge 3$), was obtained by Fo\v sner et al.\ in \cite[Proposition~5.2]{Fosneretal}. We conclude the paper by demonstrating that non-constant Jordan multiplicative maps defined on general central SMAs, the $C^*$-algebra of bounded linear operators on an infinite-dimensional Hilbert space,
	or on $M_n(\F)$ when $\chr(\F)=2$ (the $\diamond$-multiplicative variant), are no longer automatically additive (Examples \ref{ex:notSMA}, \ref{ex:notB(H)} and \ref{ex:notchar2}). 
	
	\section{Notation and Preliminaries}\label{sec:prel}
	We now introduce some notation that will be used throughout the paper. Let $\F$ be a fixed field of characteristic not equal to $2$. By $\F^\times$ we denote the group of all nonzero elements in $\F$. Given a unital associative $\F$-algebra $\ca{A}$, by $\idem(\ca{A})$ we denote the partially ordered set of all idempotents $\ca{A}$, where
	$$
	p \le q \qquad \text{if} \qquad pq = qp = p.
	$$ 
	For $p \in \idem(\ca{A})$ we use the notation $p^\perp:=1-p \in \idem(\ca{A})$.  Further, for $p,q \in \idem(\ca{A})$ we write 
	$$
	p \perp q  \qquad \text{if} \qquad pq = qp =0.
	$$
	We use $\circ$ to denote the normalized Jordan product, defined by \eqref{eq:normalizedJP}. Obviously $p \in \ca{A}$ is an idempotent if and only if it is a Jordan idempotent (i.e.\ satisfies $p \circ p = p$). We explicitly state the following simple lemma from \cite{GogicTomasevic2}, which will be used on several occasions.
	\begin{lemma}[{\cite[Lemma~2.1]{GogicTomasevic2}}]\label{le:Jordan product calculations} Let $\ca{A}$ be an $\F$-algebra. For $p,q \in \idem(\ca{A})$ and an arbitrary $a \in \ca{A}$ we have:
		\begin{enumerate}[(a)]
			\item $p \circ a = 0$ if and only if $pa = ap = pap = 0$.
			\item $p \circ a = a$ if and only if $pa = ap = pap=a$.
			\item $p \perp q$ if and only if $p \circ q = 0$.
			\item $p \le q$ if and only if $p \circ q = p$.
		\end{enumerate}
	\end{lemma}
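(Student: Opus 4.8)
The plan is to treat parts (a) and (b) as the two genuine computations and then obtain (c) and (d) as immediate specializations. The only tools needed are the definition $p \circ a = \tfrac{1}{2}(pa+ap)$, the idempotent relation $p^2=p$, and the standing hypothesis $\chr(\F)\neq 2$. The recurring device is to multiply a given Jordan relation by $p$ on the left and on the right separately, using $p^2=p$ to collapse the resulting products and thereby isolate the one-sided expressions $pa$ and $ap$.

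For (a), the reverse implication is trivial since $pa=ap=0$ forces $p\circ a=0$, so the content lies in the forward direction. Starting from $pa+ap=0$, I would left-multiply by $p$ to get $pa+pap=0$ and right-multiply by $p$ to get $pap+ap=0$. Subtracting these two relations eliminates $pap$ and yields $pa=ap$; substituting back into $pa+ap=0$ gives $2\,pa=0$, whence $pa=0$ since $\chr(\F)\neq 2$. Then $ap=-pa=0$ and $pap=0$ follow at once, so all three vanishing conditions hold.

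For (b) I would run the identical argument on the relation $pa+ap=2a$: left- and right-multiplication by $p$ produce $pap=pa$ and $pap=ap$ respectively, so $pa=ap=pap$, and inserting $pa=ap$ into $pa+ap=2a$ gives $2\,pa=2a$, hence $pa=a$ (again using $\chr(\F)\neq 2$), with $ap=a$ and $pap=a$ following immediately; the converse is the same one-line verification as in (a). Part (c) is then literally (a) applied with $a=q$, because $p\perp q$ means $pq=qp=0$. For (d) I would use the commutativity of the Jordan product, $p\circ q=q\circ p$, and apply (b) with the idempotent $q$ in place of $p$ and $a=p$: this shows that $p\circ q=p$ is equivalent to $qp=pq=qpq=p$, which is in turn equivalent to $p\le q$, i.e.\ $pq=qp=p$, the extra condition $qpq=p$ being redundant since $qpq=q(pq)=qp=p$ already follows from $pq=p$.

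I do not expect any genuine obstacle here, as the argument is pure bookkeeping with idempotents. The single point that requires care is the cancellation of the factor $2$ in passing from $2\,pa=0$ to $pa=0$ (and from $2\,pa=2a$ to $pa=a$); this is precisely where $\chr(\F)\neq 2$ is indispensable. It is also worth noting that no unitality of $\ca{A}$ is used anywhere in this argument, so the lemma holds for arbitrary $\F$-algebras.
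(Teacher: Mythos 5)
Your proof is correct in all four parts: the left/right multiplication trick in (a) and (b), the cancellation of $2$ using $\chr(\F)\neq 2$, and the reduction of (c) and (d) to (a) and (b) (via commutativity of $\circ$ for (d)) all check out. Note that the paper itself gives no proof of this lemma---it is imported by citation from \cite[Lemma~2.1]{GogicTomasevic2}---so there is nothing internal to compare against; your argument is the standard elementary verification one would expect that reference to contain, and it is complete as written.
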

	
	\smallskip

	Let $n \in \N$ be a fixed positive integer.
	\begin{itemize}
		\item[--] By $[n]$ we denote the set $\{1,\ldots, n\}$ and by $\Delta_n$ the diagonal $\{(j,j) : j \in [n]\}$ in $[n]^2$.
		\item[--] By $M_n=M_n(\mathbb{F})$ we denote the algebra of $n \times n$ matrices over $\F$ and by  $\ca{D}_n$ its subalgebra consisting of all diagonal matrices. 
		\item[--] The rank of a matrix $X \in M_n$ is denoted by $r(X)$.
		\item[--] For matrices $X,Y \in M_n$, we write $X \propto Y$  to indicate that either  $X = Y = 0$, or they are both nonzero and collinear.
		\item[--] As usual, for $i,j \in [n]$, by $E_{ij}\in M_n$  we denote the standard matrix unit with $1$ at the position $(i,j)$ and $0$ elsewhere. For a matrix $X = [X_{ij}]_{i,j=1}^n \in M_n$ we define its \emph{support} as 
		$$
		\supp X:=\{(i,j)\in [n]^2 : X_{ij}\ne 0\}.
		$$ 
		\item[--] Given a ring endomorphism $\omega$ of $\F$, we use the same symbol $\omega$ to denote the induced ring endomorphism of $M_n$, defined by applying $\omega$ to each entry of the corresponding matrix:  
		$$\omega(X)=[\omega(X_{ij})]_{i,j=1}^n, \quad \text{for all } X=[X_{ij}]_{i,j=1}^n\in M_n.$$
	\end{itemize}
	
	It is well-known that $M_n$ is a simple algebra, and hence a simple Jordan algebra (see e.g. \cite[Corollary of Theorem~1.1]{Herstein-book}). In fact, we have the following simple yet useful observation.
	\begin{proposition}\label{p:M_n is Jordans simple}
		For an arbitrary matrix $X \in M_n$ define the subset $\ca{J}_X \subseteq M_n$ by
		$$
		\ca{J}_X:=\left\{(\cdots (X \circ Y_1)\circ Y_2) \circ \cdots )\circ Y_k : k \in \N, \, Y_1, \ldots, Y_k \in M_n\right\}.
		$$
		If $X \ne 0$, then $\ca{J}_X=M_n$. 
	\end{proposition}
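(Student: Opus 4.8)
The plan is to reduce everything to producing a scalar matrix inside $\ca J_X$. First observe that $\ca J_X$ is by construction closed under the operation $Z \mapsto Z \circ Y$ for every $Y \in M_n$; in particular, taking $Y = W$ it is closed under Jordan squares, and since $W^k \circ W = \tfrac12(W^k W + W W^k) = W^{k+1}$ it contains every power of any one of its elements. The key reduction is an ``identity trick'': if $I \in \ca J_X$, or more generally $cI \in \ca J_X$ for some $c \in \F^\times$, then for every $Z \in M_n$ we have $Z = I \circ Z \in \ca J_X$ (respectively $Z = (cI)\circ(c^{-1}Z)$), so $\ca J_X = M_n$. Thus it suffices to show that $\ca J_X$ contains a nonzero scalar multiple of $I$.

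Second, I would show that $\ca J_X$ contains a nonzero scalar multiple of a single matrix unit. Fix $(p,q) \in \supp X$. If $p \neq q$, a short computation gives $(E_{pp}\circ X)\circ E_{qq} = \tfrac14\bigl(X_{pq}E_{pq} + X_{qp}E_{qp}\bigr)$; calling this $V$, if $X_{qp} = 0$ then already $V \propto E_{pq}$, while otherwise $V \circ V = V^2 \propto E_{pp} + E_{qq}$, and then $(E_{pp}+E_{qq})\circ E_{pr} = \tfrac12 E_{pr}$ for any $r \neq p,q$ (this needs $n \ge 3$; for $n = 2$ the element $E_{pp}+E_{qq}$ is already $I$ and we are done). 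The purely diagonal case is handled similarly, via $X \circ E_{ij} = \tfrac12(X_{ii}+X_{jj})E_{ij}$ together with a Jordan square if necessary. Rescaling by an idempotent then yields an honest matrix unit $E_{pq} \in \ca J_X$, and from it every matrix unit (using $E_{ij}\circ E_{jk} = \tfrac12 E_{ik}$ for $i \neq k$), every ``cross'' $E_{pq}\circ Y$ supported on row $p$ and column $q$, and every $\tfrac12(E_{ii}+E_{jj}) = E_{ij}\circ E_{ji}$.

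The heart of the argument — and the main obstacle — is that $\ca J_X$ is a priori only closed under right Jordan multiplication and \emph{not} under addition, so the additive identity $I = \sum_i E_{ii}$ cannot be assembled term by term; indeed orthogonal pieces multiply to $0$. The point is that suitable Jordan \emph{products} intrinsically manufacture sums: already $E_{ij}\circ E_{ji} = \tfrac12(E_{ii}+E_{jj})$, and, more strikingly, a product of two sparse reachable matrices can equal a full cyclic permutation, e.g.\ in $M_3$ one checks
\[
(E_{13}+E_{21})\circ\bigl(2E_{13}+2E_{32}\bigr) = E_{12}+E_{23}+E_{31} =: C ,
\]
both factors being reachable crosses. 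I would therefore produce the $n$-cycle $C = E_{12}+E_{23}+\dots+E_{n-1,n}+E_{n1}$ as such a Jordan product of crosses — in general building it up on $\{1,\dots,m\}$ by induction on $m$, splicing in one index at a time — and then invoke closure under powers: since $C^k \circ C = C^{k+1}$, all of $C^2, C^3, \dots, C^n$ lie in $\ca J_X$, and $C^n = I$. The identity trick then gives $\ca J_X = M_n$. Conceptually this is just the set-theoretic incarnation of the cited Jordan-simplicity of $M_n$; the only genuine work is the explicit, characteristic-free synthesis of a permutation matrix (equivalently, of $I$) purely by iterated Jordan products, which is where I expect the bookkeeping to concentrate.
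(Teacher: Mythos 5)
Your overall architecture is sound, and the parts you carried out are correct: the identity trick, the closure of $\ca{J}_X$ under powers of its own elements (which the paper never uses), the extraction of a matrix unit from a nonzero entry of $X$ (including the degenerate case $X=\mathrm{diag}(a,-a)$ for $n=2$), and the $M_3$ identity $(E_{13}+E_{21})\circ(2E_{13}+2E_{32})=E_{12}+E_{23}+E_{31}$ all check out. (One simplification you missed: only the \emph{left} factor of a Jordan product needs to lie in $\ca{J}_X$; the right factor is completely free.) This is also a genuinely different route from the paper, which never forms powers: there, after securing the diagonal units, the partial identities $D_r=E_{11}+\cdots+E_{rr}$ are built inductively via two explicit Jordan multiplications per step, $D_r=(A_r\circ B_r)\circ C_r$. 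But your proposal defers exactly the hard part --- the synthesis of the $n$-cycle $C$ for general $n$ --- to ``bookkeeping'', and both mechanisms you hint at provably fail.

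First, $C$ cannot be a single Jordan product $A\circ B$ with $A$ a cross once $n\ge 5$: a matrix supported on row $p$ and column $q$ has rank at most $2$ (all rows other than row $p$ have their only possible nonzero entry in column $q$), so $r(A\circ B)\le r(AB)+r(BA)\le 4$, whereas $r(C)=n$. Second, one-step splicing of cycles also fails at the very first step: writing $C_m$ for the cycle on $\{1,\dots,m\}$, every matrix of the form $C_2\circ Y=(E_{12}+E_{21})\circ Y$ has equal $(1,2)$- and $(2,1)$-entries, both $\tfrac12(Y_{11}+Y_{22})$, so no $Y$ whatsoever gives $C_2\circ Y=C_3$ (which has entries $1$ and $0$ there); and $C_2$ is the only partial cycle that is itself a cross, so the induction you describe cannot even leave its base case. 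The gap is repairable, but only by interleaving your powers trick with the splicing instead of saving it for the end: since $D_m\circ Y$ equals half the sum of (the first $m$ rows of $Y$) and (the first $m$ columns of $Y$), one checks directly that
$$D_m \circ \Big(\sum_{i=1}^{m-1}E_{i,i+1}+2E_{m,m+1}+2E_{m+1,1}\Big)=C_{m+1},$$
and then $D_{m+1}=C_{m+1}^{m+1}\in\ca{J}_X$ by closure under powers; starting from $D_1=E_{11}$ this induction ends with $I=D_n$, and your identity trick finishes the proof. With that amendment your argument closes, and is arguably tidier than the paper's explicit matrices $A_r,B_r,C_r$; as written, however, the central inductive step does not exist.
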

	\begin{proof}
		Fix a nonzero matrix $X\in M_n$. It suffices to show that $I \in \ca{J}_X$. 
		
		\begin{itemize}
			\item Suppose that $X_{ij} \ne 0$ for some distinct $i,j \in [n]$. First of all, we have
			$$X_{ij}E_{ji} =E_{ji}X E_{ji} = (X \circ E_{ji}) \circ (2E_{ji})\in \ca{J}_X,$$
			so that
			$$E_{ii}=\left((X_{ij}E_{ji}) \circ \left(\frac2{X_{ij}}E_{ij}\right)\right)\circ E_{ii}  \in \ca{J}_X.$$
			
			\item Otherwise, suppose that $X \in \ca{D}_n$ and fix some $i \in [n]$ such that $X_{ii} \ne 0$. Then
			$$E_{ii} = \left(X \circ \left(\frac1{X_{ii}}E_{ii}\right)\right) \circ E_{ii} \in \ca{J}_X.$$
		\end{itemize}
		In any case, $E_{ii}\in \ca{J}_X$ for some $i \in [n]$ also implies $E_{jj}\in \ca{J}_X$ for all $j \in [n]$. Indeed, if  $j \ne i$:
		\begin{align*}
			E_{ji} = (2E_{ji})\circ E_{ii} \in \ca{J}_X & \implies  E_{ii}+E_{jj}=E_{ji}\circ (2E_{ij}) \in \ca{J}_X \\
			&\implies  E_{jj}=(E_{ii}+E_{jj}) \circ  E_{jj} \in \ca{J}_X.
		\end{align*}
		For $r \in [n]$ define
		$$D_r := \sum_{j \in [r]} E_{jj} \in M_n.$$
		We prove that $D_r \in \ca{J}_X$ for all $r \in [n]$ by induction on $r$ (then $I=D_n \in \ca{J}_X$). To illustrate the process on a concrete example, consider $n=5$. We have
		$$\underbrace{\begin{bmatrix}
				1 & 0 & 0 & 0 & 0\\
				0 & 0 & 0 & 0 & 0\\
				0 & 0 & 0 & 0 & 0\\
				0 & 0 & 0 & 0 & 0\\
				0 & 0 & 0 & 0 & 0\\
		\end{bmatrix}}_{=A_2\in \ca{J}_X} \circ \underbrace{\begin{bmatrix}
				0 & -4 & 0 & 0 & 0\\
				0 & 0 & 0 & 0 & 0\\
				0 & 0 & 0 & 0 & 0\\
				0 & 0 & 0 & 0 & 0\\
				0 & 0 & 0 & 0 & 0\\
		\end{bmatrix}}_{= B_2} = \underbrace{\begin{bmatrix}
				0 & -2 & 0 & 0 & 0\\
				0 & 0 & 0 & 0 & 0\\
				0 & 0 & 0 & 0 & 0\\
				0 & 0 & 0 & 0 & 0\\
				0 & 0 & 0 & 0 & 0\\
		\end{bmatrix}}_{\in \ca{J}_X},$$
		$$\underbrace{\begin{bmatrix}
				0 & -2 & 0 & 0 & 0\\
				0 & 0 & 0 & 0 & 0\\
				0 & 0 & 0 & 0 & 0\\
				0 & 0 & 0 & 0 & 0\\
				0 & 0 & 0 & 0 & 0\\
		\end{bmatrix}}_{\in \ca{J}_X} \circ \underbrace{\begin{bmatrix}
				0 & 0 & 0 & 0 & 0\\
				-1 & 0 & 0 & 0 & 0\\
				0 & 0 & 0 & 0 & 0\\
				0 & 0 & 0 & 0 & 0\\
				0 & 0 & 0 & 0 & 0\\
		\end{bmatrix}}_{= C_2} = \begin{bmatrix}
			1 & 0 & 0 & 0 & 0\\
			0 & 1 & 0 & 0 & 0\\
			0 & 0 & 0 & 0 & 0\\
			0 & 0 & 0 & 0 & 0\\
			0 & 0 & 0 & 0 & 0\\
		\end{bmatrix} = D_2 \implies D_2 \in \ca{J}_X,$$
		$$\underbrace{\begin{bmatrix}
				1 & 0 & 0 & 0 & 0\\
				0 & 1 & 0 & 0 & 0\\
				0 & 0 & 0 & 0 & 0\\
				0 & 0 & 0 & 0 & 0\\
				0 & 0 & 0 & 0 & 0\\
		\end{bmatrix}}_{=A_3\in \ca{J}_X} \circ \underbrace{\begin{bmatrix}
				0 & 0 & 4 & 0 & 0\\
				0 & -1 & 0 & 0 & 0\\
				0 & 0 & 0 & 0 & 0\\
				0 & 0 & 0 & 0 & 0\\
				0 & 0 & 0 & 0 & 0\\
		\end{bmatrix}}_{= B_3} = \underbrace{\begin{bmatrix}
				0 & 0 & 2 & 0 & 0\\
				0 & -1 & 0 & 0 & 0\\
				0 & 0 & 0 & 0 & 0\\
				0 & 0 & 0 & 0 & 0\\
				0 & 0 & 0 & 0 & 0\\
		\end{bmatrix}}_{\in \ca{J}_X},$$
		$$\underbrace{\begin{bmatrix}
				0 & 0 & 2 & 0 & 0\\
				0 & -1 & 0 & 0 & 0\\
				0 & 0 & 0 & 0 & 0\\
				0 & 0 & 0 & 0 & 0\\
				0 & 0 & 0 & 0 & 0\\
		\end{bmatrix}}_{\in \ca{J}_X} \circ \underbrace{\begin{bmatrix}
				0 & 0 & 0 & 0 & 0\\
				0 & -1 & 0 & 0 & 0\\
				1 & 0 & 0 & 0 & 0\\
				0 & 0 & 0 & 0 & 0\\
				0 & 0 & 0 & 0 & 0\\
		\end{bmatrix}}_{=C_3} = \begin{bmatrix}
			1 & 0 & 0 & 0 & 0\\
			0 & 1 & 0 & 0 & 0\\
			0 & 0 & 1 & 0 & 0\\
			0 & 0 & 0 & 0 & 0\\
			0 & 0 & 0 & 0 & 0\\
		\end{bmatrix} = D_3 \implies D_3 \in \ca{J}_X,$$
		$$\underbrace{\begin{bmatrix}
				1 & 0 & 0 & 0 & 0\\
				-2 & 1 & 0 & 0 & 0\\
				0 & 0 & 0 & 0 & 0\\
				0 & 0 & 0 & 0 & 0\\
				0 & 0 & 0 & 0 & 0\\
		\end{bmatrix}}_{= A_4 = A_4 \circ D_2 \in \ca{J}_X} \circ \underbrace{\begin{bmatrix}
				0 & 0 & 0 & 4 & 0\\
				0 & 0 & -4 & 8 & 0\\
				0 & 0 & 0 & 0 & 0\\
				0 & 0 & 0 & 0 & 0\\
				0 & 0 & 0 & 0 & 0\\
		\end{bmatrix}}_{=B_4} = \underbrace{\begin{bmatrix}
				0 & 0 & 0 & 2 & 0\\
				0 & 0 & -2 & 0 & 0\\
				0 & 0 & 0 & 0 & 0\\
				0 & 0 & 0 & 0 & 0\\
				0 & 0 & 0 & 0 & 0\\
		\end{bmatrix}}_{\in \ca{J}_X},$$
		$$\underbrace{\begin{bmatrix}
				0 & 0 & 0 & 2 & 0\\
				0 & 0 & -2 & 0 & 0\\
				0 & 0 & 0 & 0 & 0\\
				0 & 0 & 0 & 0 & 0\\
				0 & 0 & 0 & 0 & 0\\
		\end{bmatrix}}_{\in \ca{J}_X} \circ \underbrace{\begin{bmatrix}
				0 & 0 & 0 & 0 & 0\\
				0 & 0 & 0 & 0 & 0\\
				0 & -1 & 0 & 0 & 0\\
				1 & 0 & 0 & 0 & 0\\
				0 & 0 & 0 & 0 & 0\\
		\end{bmatrix}}_{=C_4} = \begin{bmatrix}
			1 & 0 & 0 & 0 & 0\\
			0 & 1 & 0 & 0 & 0\\
			0 & 0 & 1 & 0 & 0\\
			0 & 0 & 0 & 1 & 0\\
			0 & 0 & 0 & 0 & 0\\
		\end{bmatrix} = D_4 \implies D_4 \in \ca{J}_X,$$
		$$\underbrace{\begin{bmatrix}
				1 & 0 & 0 & 0 & 0\\
				-2 & 1 & 0 & 0 & 0\\
				0 & 0 & 1 & 0 & 0\\
				0 & 0 & 0 & 0 & 0\\
				0 & 0 & 0 & 0 & 0\\
		\end{bmatrix}}_{=A_5=A_5 \circ D_3 \in \ca{J}_X} \circ \underbrace{\begin{bmatrix}
				0 & 0 & 0 & 0 & 4\\
				0 & 0 & 0 & 4 & 8\\
				0 & 0 & -1 & 0 & 0\\
				0 & 0 & 0 & 0 & 0\\
				0 & 0 & 0 & 0 & 0\\
		\end{bmatrix}}_{=B_5} = \underbrace{\begin{bmatrix}
				0 & 0 & 0 & 0 & 2\\
				0 & 0 & 0 & 2 & 0\\
				0 & 0 & -1 & 0 & 0\\
				0 & 0 & 0 & 0 & 0\\
				0 & 0 & 0 & 0 & 0\\
		\end{bmatrix}}_{\in \ca{J}_X},$$
		$$\underbrace{\begin{bmatrix}
				0 & 0 & 0 & 0 & 2\\
				0 & 0 & 0 & 2 & 0\\
				0 & 0 & -1 & 0 & 0\\
				0 & 0 & 0 & 0 & 0\\
				0 & 0 & 0 & 0 & 0\\
		\end{bmatrix}}_{\in \ca{J}_X} \circ \underbrace{\begin{bmatrix}
				0 & 0 & 0 & 0 & 0\\
				0 & 0 & 0 & 0 & 0\\
				0 & 0 & -1 & 0 & 0\\
				0 & 1 & 0 & 0 & 0\\
				1 & 0 & 0 & 0 & 0\\
		\end{bmatrix}}_{=C_5} = \begin{bmatrix}
			1 & 0 & 0 & 0 & 0\\
			0 & 1 & 0 & 0 & 0\\
			0 & 0 & 1 & 0 & 0\\
			0 & 0 & 0 & 1 & 0\\
			0 & 0 & 0 & 0 & 1\\
		\end{bmatrix} = D_5=I \implies I \in \ca{J}_X.$$
		Of course, if $p:=\mathrm{char}(\F)\ne 0$, all operations are performed modulo $p$.
		
		\smallskip
		
		We continue with the proof for general $n$. For $r = 1$ we have $D_1=E_{11} \in \ca{J}_X$. Suppose that $D_{r-1} \in \ca{J}_X$ for some $2 \le r \le n$. We prove that $D_r\in \ca{J}_X$. Let $(p_j)_{j \in \N}$ be a sequence in $\F$ defined as
		$$p_j := \begin{cases}
			1, &\quad \text{ if }j = 1,\\
			2 \cdot 3^{j-2}, &\quad \text{ if }j \ge 2,\\
		\end{cases}$$
		and
		$$A_r := \begin{cases}
			\left(\sum_{1 \le j \le k} E_{jj}\right) - 2\left(\sum_{1 \le j < i \le k} E_{ij}\right), &\quad \text{ if }r = 2k,\\
			\left(\sum_{1 \le j \le k+1} E_{jj}\right) - 2\left(\sum_{1 \le j < i \le k} E_{ij}\right), &\quad \text{ if }r = 2k+1,\\
		\end{cases}$$
		$$B_r := \begin{cases}
			-8E_{k,k+1}+4\left(\sum_{1 \le i \le j \le k} p_i E_{j,2k+i-j}\right), &\quad \text{ if }r = 2k,\\
			-E_{k+1,k+1}+4\left(\sum_{1 \le i \le j \le k} p_i E_{j,2k+1+i-j}\right), &\quad \text{ if }r = 2k+1,\\
		\end{cases}$$
		$$C_r := \begin{cases}
			-E_{k+1,k}+\sum_{1 \le j \le k} E_{2k+1-j,j}, &\quad \text{ if }r = 2k,\\
			-E_{k+1,k+1}+\sum_{1 \le j \le k} E_{2k+2-j,j}, &\quad \text{ if }r = 2k+1.
		\end{cases}
		$$
		We have that $\supp A_r \subseteq [r-1]\times[r-1]$, so 
		$$A_r = A_r \circ D_{r-1} \in \ca{J}_X.$$
		Hence, using the observation $p_j = 2(p_1 + \cdots + p_{j-1})$, for $j \ge 2$, a straightforward calculation shows that
		$$D_r = (A_r \circ B_r) \circ C_r \in \ca{J}_X.$$
	\end{proof}
	We shall also require the following elementary fact, which is a simplified version of \cite[Lemma~3.3]{GogicTomasevic2} (applicable to general SMAs).
	\begin{lemma}\label{le:contains entire CxC}
		Let $\ca{S} \subseteq [n]^2 \setminus \Delta_n, n \ge 2$, be a nonempty subset. Suppose that for each $(i,j) \in \ca{S}$ we have:
		\begin{enumerate}[(a)]
			\item $(i,k) \in \ca{S}, \text{ for all } k \in [n]\setminus \{i\}$,
			\item $(l,j) \in \ca{S}, \text{ for all } l \in [n]\setminus \{j\}$,
			\item $(j,i) \in \ca{S}.$
		\end{enumerate}
		Then $\ca{S} = [n]^2 \setminus \Delta_n$.
	\end{lemma}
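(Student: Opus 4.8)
The plan is to prove the nontrivial inclusion $[n]^2 \setminus \Delta_n \subseteq \ca{S}$, since the reverse inclusion is the standing hypothesis on $\ca{S}$. It helps to read the three conditions as closure properties: (a) says that as soon as $\ca{S}$ meets row $i$, it contains all of row $i$ off the diagonal; (b) is the column analogue; and (c) says $\ca{S}$ is invariant under the index flip $(i,j)\mapsto(j,i)$. Since $\ca{S}\ne\emptyset$, I would fix some $(i,j)\in\ca{S}$ (necessarily with $i\ne j$) and then "propagate" membership to every off-diagonal position.

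The propagation I have in mind is a short three-step cascade. First, applying (a) to $(i,j)$ fills the entire $i$-th row off the diagonal: $(i,k)\in\ca{S}$ for all $k\ne i$. Second, applying (c) to each such $(i,k)$ transposes these into the entire $i$-th column off the diagonal: $(k,i)\in\ca{S}$ for all $k\ne i$. Third, for each fixed $k\ne i$ the element $(k,i)$ now lies in $\ca{S}$, so applying (a) to it fills the whole $k$-th row off the diagonal: $(k,m)\in\ca{S}$ for all $m\ne k$. Taking the union of the $i$-th row (from step one) with every row $k\ne i$ (from step three) exhausts $[n]^2\setminus\Delta_n$, which gives the claim. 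Notice this argument invokes only (a) and (c); condition (b) turns out to be redundant here, although it is harmless to keep it for symmetry with its intended application.

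There is no serious obstacle in this lemma; the only thing requiring care is index bookkeeping, namely checking at each step that one never inadvertently asserts membership of a diagonal pair and that the index to which a closure rule is applied is genuinely distinct from the relevant coordinate. Concretely, (a) applied to $(i,k)$ produces $(i,m)$ with $m\ne i$, (c) applied to $(i,k)$ with $k\ne i$ produces $(k,i)$ with $k\ne i$, and (a) applied to $(k,i)$ produces $(k,m)$ with $m\ne k$, so every generated pair stays off the diagonal. One should also confirm that the cascade already works in the base case $n=2$, where it simply recovers $\{(1,2),(2,1)\}$ from a single element via (a) (trivially) and (c), so no separate small-$n$ analysis is needed.
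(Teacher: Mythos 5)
Your proof is correct, and it takes a slightly different route from the paper's. The paper argues target-by-target: fixing $(i,j)\in\ca{S}$ and an arbitrary off-diagonal $(k,l)$, it uses (b) then (a) when $k\ne j$ (go down column $j$ to $(k,j)$, then across row $k$), uses (a) then (b) when $l\ne i$ (across row $i$ to $(i,l)$, then down column $l$), and invokes (c) only to handle the single leftover position $(k,l)=(j,i)$. Your cascade instead propagates forward from the source --- fill row $i$ by (a), flip it into column $i$ by (c), then fill every row $k\ne i$ by (a) again --- and in doing so never touches (b) at all. What your argument buys is the observation that hypothesis (b) is logically redundant given (a) and (c) (and, by the transposition symmetry of the hypotheses, (a) would likewise be redundant given (b) and (c)), which is a mildly sharper statement than the lemma as written; what the paper's version buys is a symmetric treatment of rows and columns that mirrors how conditions (a)--(c) are actually verified in its application inside the proof of Theorem 3.4, where all three closure properties are established on equal footing. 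Your bookkeeping at each step (that generated pairs stay off the diagonal) and your check of the case $n=2$ are both accurate.
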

	\begin{proof}
		Fix some $(i,j) \in \ca{S}$ and let $(k,l) \in [n]^2 \setminus \Delta_n$ be arbitrary. If $k \ne j$, then
		$$(i,j) \in \ca{S} \stackrel{(b)}\implies (k,j) \in \ca{S} \stackrel{(a)}\implies (k,l) \in \ca{S}.$$
		If $l \ne i$, then
		$$(i,j) \in \ca{S} \stackrel{(a)}\implies (i,l) \in \ca{S} \stackrel{(b)}\implies (k,l) \in \ca{S}.$$
		Finally, if $(k,l) = (j,i)$, then the claim follows directly from (c).
	\end{proof}
	
		
	
	\section{Proof of Theorem \ref{thm:main}}\label{sec:main}
	Let $m,n\in \N$ be fixed throughout the proof. Before proving our main result, we first establish some preliminary results, starting with the following straightforward consequence of Proposition \ref{p:M_n is Jordans simple}.
	\begin{lemma}\label{le:zero map}
		Let $\ca{A}$ be an arbitrary $\F$-algebra and let  $\phi : M_n \to \ca{A}$ be a $\circ$-multiplicative map such that $\phi(X) = 0$ for some nonzero matrix $X \in M_n$. Then $\phi$ is the zero map.
	\end{lemma}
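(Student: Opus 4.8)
The plan is to let Proposition~\ref{p:M_n is Jordans simple} do essentially all of the work. Since $X \neq 0$, that proposition gives $\ca{J}_X = M_n$, so it suffices to prove that $\phi$ annihilates every element of $\ca{J}_X$.

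The one elementary observation I would isolate first is that $0 \circ a = \tfrac{1}{2}(0\cdot a + a \cdot 0) = 0$ for every $a \in \ca{A}$; this is precisely what allows a single zero value of $\phi$ to propagate through arbitrary iterated Jordan products.

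With this in hand, I would induct on the number $k$ of factors appearing in a generic element
\[
Z = (\cdots(X \circ Y_1)\circ Y_2)\circ \cdots )\circ Y_k \in \ca{J}_X.
\]
For $k=1$, the $\circ$-multiplicativity of $\phi$ together with $\phi(X)=0$ yields $\phi(X \circ Y_1) = \phi(X)\circ\phi(Y_1) = 0 \circ \phi(Y_1) = 0$. For the inductive step, write $Z = W \circ Y_k$, where $W \in \ca{J}_X$ is the shorter product, for which $\phi(W)=0$ by the inductive hypothesis; then $\phi(Z) = \phi(W)\circ\phi(Y_k) = 0\circ\phi(Y_k) = 0$. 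Hence $\phi$ vanishes on all of $\ca{J}_X = M_n$, i.e.\ $\phi$ is the zero map.

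There is no genuine obstacle remaining: the real content, namely that the iterated Jordan products of a single nonzero matrix already exhaust $M_n$, has been absorbed into Proposition~\ref{p:M_n is Jordans simple}. The only point requiring mild care is to run the induction on the bracketing structure defining $\ca{J}_X$ (so that each $Z$ is literally of the form $W \circ Y_k$ with $W$ a strictly shorter product), rather than on the matrices themselves; but since we only ever need one such representation, this causes no difficulty.
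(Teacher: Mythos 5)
Your proposal is correct and follows exactly the paper's argument: the paper's own proof is the same one-line deduction from Proposition~\ref{p:M_n is Jordans simple}, leaving implicit the induction on iterated Jordan products (via $0 \circ a = 0$) that you spell out explicitly.
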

	\begin{proof}
		By Proposition \ref{p:M_n is Jordans simple} we have $\ca{J}_X=M_n$, and therefore $\phi(X)=0$ implies that $\phi$ is the zero map.  
	\end{proof}
	The following lemma, which is a variant of \cite[Lemma~3.4]{GogicTomasevic2} (originally for injective $\circ$-multiplicative maps on SMAs), outlines the general properties of (not necessarily injective)  $\circ$-multiplicative maps between matrix algebras.
	\begin{lemma}\label{le:basic properties II}
		Let $\phi : M_n \to M_m$, be a $\circ$-multiplicative map. Then the following holds true:
		\begin{enumerate}
			\item[(a)] $\phi$  preserves idempotents, i.e.\ $\phi(\idem(M_n))\subseteq \idem(M_m)$.
			\item[(b)] For $P,Q \in \idem(M_n)$ we have $P \le Q \implies \phi(P) \le \phi(Q)$.
		\end{enumerate}
		Suppose now that $\phi$ is nonzero but $\phi(0) = 0$. Then:
		\begin{enumerate}
			\item[(c)] For $P,Q \in \idem(M_n)$ we have $P \perp Q \implies \phi(P) \perp \phi(Q)$.
			\item[(d)] For each nonzero $P \in \idem(M_n)$ we have $r(\phi(P)) \ge r(P)$ (in particular, $m \ge n$).
		\end{enumerate}
		Further, if $m=n$, then:
		\begin{enumerate}
			\item[(e)] For each $P \in \idem(M_n)$ we have $r(\phi(P)) = r(P)$.
			\item[(f)] For each $P \in \idem(M_n)$ we have $\phi(P^\perp) = \phi(P)^\perp$.
			\item[(g)] The restriction $\phi|_{\idem(M_n)}: \idem(M_n)\to \idem(M_n)$ is orthoadditive, i.e.\
			$$P \perp Q \implies \phi(P+Q) = \phi(P)+\phi(Q), \quad  \text{for all } P,Q \in \idem(M_n).$$
			\item[(h)] Suppose that $P_1,\ldots,P_r \in \idem(M_n)$ are mutually orthogonal and let $\lambda_1,\ldots,\lambda_r \in \F$. Then
			$$\phi\left(\sum_{j=1}^r \lambda_j P_j\right) = \sum_{j=1}^r \phi(\lambda_j P_j).$$
		\end{enumerate}
	\end{lemma}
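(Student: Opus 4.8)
The first three parts I would dispatch directly from the Jordan multiplicativity together with Lemma~\ref{le:Jordan product calculations}. For (a), every idempotent satisfies $P \circ P = P$, so applying $\phi$ gives $\phi(P) = \phi(P) \circ \phi(P)$, i.e.\ $\phi(P)$ is a Jordan idempotent and hence an idempotent. For (b), I would use that $P \le Q$ is equivalent to $P \circ Q = P$ by Lemma~\ref{le:Jordan product calculations}(d); applying $\phi$ yields $\phi(P) \circ \phi(Q) = \phi(P)$, which—since $\phi(P),\phi(Q)$ are idempotents by (a)—means $\phi(P) \le \phi(Q)$. Part (c) is the same argument, using $P \perp Q \iff P \circ Q = 0$ from Lemma~\ref{le:Jordan product calculations}(c) together with the hypothesis $\phi(0) = 0$.

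For (d) the plan is first to record that, because $\phi$ is assumed nonzero, Lemma~\ref{le:zero map} forces $\phi(P) \ne 0$ for \emph{every} nonzero $P \in \idem(M_n)$, so $r(\phi(P)) \ge 1$. Then I would decompose a given nonzero idempotent $P$ of rank $r = r(P)$ as a sum $P = P_1 + \cdots + P_r$ of mutually orthogonal rank-one idempotents. By (c) the images $\phi(P_1), \ldots, \phi(P_r)$ are pairwise orthogonal nonzero idempotents, and by (b) each satisfies $\phi(P_i) \le \phi(P)$. A short check shows that pairwise orthogonal idempotents all dominated by a common idempotent $\phi(P)$ have orthogonal sum again dominated by $\phi(P)$; and since for orthogonal idempotents rank equals trace and is additive, I obtain
\[
r(\phi(P)) \ge r\Bigl(\sum_{i=1}^r \phi(P_i)\Bigr) = \sum_{i=1}^r r(\phi(P_i)) \ge r.
\]
Taking $P = I$ gives $m \ge r(\phi(I)) \ge n$. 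This counting step—converting the purely order-theoretic data of (b) and (c) into a genuine rank bound via the non-vanishing from Lemma~\ref{le:zero map} and the additivity of trace—is where the real work lies, and I expect it to be the main obstacle.

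With (d) available, parts (e) and (f) follow by a squeezing argument under the standing hypothesis $m = n$. Applying (d) to both $P$ and $P^\perp$ gives $r(\phi(P)) \ge r(P)$ and $r(\phi(P^\perp)) \ge n - r(P)$. Since $P \perp P^\perp$, part (c) makes $\phi(P)$ and $\phi(P^\perp)$ orthogonal, so their sum is an idempotent of rank $r(\phi(P)) + r(\phi(P^\perp)) \le n$. These inequalities force equality throughout, giving (e); moreover $\phi(P) + \phi(P^\perp)$ is then a rank-$n$ idempotent of $M_n$, hence equals $I$, which is exactly the assertion $\phi(P^\perp) = \phi(P)^\perp$ of (f).

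Finally, for (g) I would squeeze once more: for orthogonal $P, Q$, parts (b) and (c) give $\phi(P) + \phi(Q) \le \phi(P+Q)$ as above, while (e) shows both sides have rank $r(P) + r(Q)$; since an idempotent dominated by another of equal rank must coincide with it (the difference is an orthogonal idempotent of rank $0$), equality follows. Part (h) then reduces to Jordan identities rather than requiring genuine additivity: setting $P := \sum_j P_j$ and $X := \sum_j \lambda_j P_j$, orthogonality yields $X \circ P_i = \lambda_i P_i$ and $X \circ P = X$. Applying $\phi$ to the first identity gives $\phi(\lambda_i P_i) = \phi(X) \circ \phi(P_i)$, and summing over $i$, using the iterated form of (g) to write $\phi(P) = \sum_i \phi(P_i)$, produces
\[
\sum_i \phi(\lambda_i P_i) = \phi(X) \circ \phi(P) = \phi(X \circ P) = \phi(X),
\]
which is precisely (h).
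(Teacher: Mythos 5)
Your argument follows the paper's proof essentially step for step: (a)--(c) are the same one-line applications of Lemma \ref{le:Jordan product calculations}; (d) uses the same decomposition of $P$ into mutually orthogonal rank-one idempotents combined with Lemma \ref{le:zero map}; (e)--(g) are the same rank-squeezing arguments; and your computation in (h) is the paper's computation read in the opposite direction (you collapse $\sum_i \phi(\lambda_i P_i)$ onto $\phi(X)$, the paper expands $\phi(X)$ into the sum), using the same ingredients $X \circ P = X$, $X \circ P_i = \lambda_i P_i$, iterated (g), and bilinearity of $\circ$. The one point you should correct is the justification of rank additivity in (d): you appeal to ``rank equals trace,'' but this is false over fields of positive characteristic, where the trace of an idempotent only equals its rank modulo $\chr(\F)$, and the paper assumes merely $\chr(\F) \ne 2$, so odd characteristic is allowed. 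The fact you need is nevertheless true over any field: for orthogonal idempotents $E,F$ the ranges intersect trivially (if $x = Ex = Fx$ then $x = E(Fx) = 0$) and the range of $E+F$ is the sum of the two ranges, so $r(E+F) = r(E) + r(F)$. With that repair (the paper itself uses this rank additivity tacitly, in both (d) and (g)), your proof is complete and coincides with the paper's.
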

	\begin{proof}
		\begin{enumerate}[(a)]
			\item This is clear.
			\item We have $$\phi(P) = \phi(P \circ Q) = \phi(P) \circ \phi(Q)$$
			which is by Lemma \ref{le:Jordan product calculations} equivalent to  $\phi(P) \le \phi(Q)$.
			\item We have
			$$\phi(P) \circ \phi(Q) = \phi(P \circ Q) = \phi(0) = 0,$$
			so again by Lemma \ref{le:Jordan product calculations}, $\phi(P) \perp \phi(Q)$.
			\item Let $P \in \idem(M_n)$ be an arbitrary idempotent of rank $r \ge 1$. There exist mutually orthogonal rank-one idempotents $P_1, \ldots, P_r \in \idem(M_n)$ such that $P=P_1+\cdots + P_r$.  
			Since $\phi$ is not the zero map, by Lemma \ref{le:zero map} $\phi$ cannot annihilate any nonzero matrix, so in particular $\phi(P_j)\ne 0$ for all $j\in [r]$. Therefore, 
			$$P_1, \ldots, P_r \le P \stackrel{(b)}\implies \underbrace{\phi(P_1), \ldots, \phi(P_r)}_{\text{mutually orthogonal by (c)}}\le \phi(P).$$
			Consequently, $r(\phi(P)) \ge r$.
			\item Let $P\in \idem(M_n)$ be an arbitrary idempotent. By (c) we have $\phi(P) \perp \phi(P^\perp)$ and hence,
			$$n = r(P) + r(P^\perp) \stackrel{(d)}\le  r(\phi(P)) + r(\phi(P^\perp)) \le n$$
			and thus $r(\phi(P)) = r(P)$.
			\item In view of (c) and (e), we have that $\phi(P^\perp)$ is an idempotent orthogonal to $\phi(P)$ of rank $r(P^\perp)=r(\phi(P)^\perp)$. Consequently, $\phi(P^\perp) =\phi(P)^\perp$.
			
			\item Since $P\perp Q$, we have that $P+Q$ is again an idempotent and $P,Q \le P+Q$. Statements (b) and (c) imply $$\underbrace{\phi(P), \phi(Q)}_{\text{orthogonal}} \le \phi(P+Q)$$ and hence
			$$\phi(P)+\phi(Q) \le \phi(P+Q).$$
			Finally, we have
			\begin{align*}
				r(\phi(P) + \phi(Q)) &= r(\phi(P)) + r(\phi(Q)) \stackrel{(e)}= r(P) + r(Q) = r(P+Q) \\
				&\leftstackrel{(e)}= r(\phi(P+Q)),
			\end{align*}
			so equality follows.
			\item We have \begin{align*}
				\phi\left(\sum_{j=1}^r \lambda_j P_j\right) &= \phi\left(\left(\sum_{j=1}^r \lambda_j P_j\right) \circ \left(\sum_{l=1}^r  P_l\right)\right) = \phi\left(\sum_{j=1}^r \lambda_j P_j\right)  \circ \phi\left(\sum_{l=1}^r P_l\right)\\
				&\leftstackrel{(g)}= \phi\left(\sum_{j=1}^r \lambda_j P_j\right)  \circ \left(\sum_{l=1}^r \phi(P_l)\right) = \sum_{l=1}^r \left(\phi\left(\sum_{j=1}^r \lambda_j P_j\right)  \circ \phi(P_l)\right)\\
				&= \sum_{l=1}^r \phi\left(\left(\sum_{j=1}^r \lambda_j P_j\right)  \circ P_l\right) = \sum_{l=1}^r \phi(\lambda_l P_l).
			\end{align*}
		\end{enumerate}
	\end{proof}
	
		
	In the sequel, $\K$ will denote the prime subfield of $\F$, i.e.\ $\K$ is generated by the multiplicative identity of $\F$ (see e.g.\ \cite{DummitFoote}). Note that $\K \cong \Q$ if $\chr(\F)=0$, or $\K \cong \Z/p\Z$ if $p=\chr(\F)>0$.
	\begin{lemma}\label{le:h exists}
		Let $\phi : M_n \to M_n$ be a nonzero $\circ$-multiplicative map such that $\phi(0) = 0$. There exists a unique multiplicative map $\omega : \F \to \F$ such that 
		\begin{equation}\label{eq:homogeneity}
			\phi(\lambda X) = \omega(\lambda) \phi(X), \quad  \text{ for all } \lambda \in \F \text{ and } X \in M_n.
		\end{equation}
		Further, if $n \ge 2$, the map $\omega : \F \to \F$ is a ring monomorphism. In particular, $\phi$ is $\K$-homogeneous.
	\end{lemma}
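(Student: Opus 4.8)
The plan is to reduce the whole statement to a single matrix, namely $Z_\lambda := \phi(\lambda I)$, and to prove that it is a scalar multiple of the identity. I would first record two immediate consequences of the hypotheses: since $X \circ X = X^2$, the map $\phi$ preserves squares, and since $I$ is a rank-$n$ idempotent, Lemma~\ref{le:basic properties II}(e) forces $\phi(I)$ to be a rank-$n$ idempotent, hence $\phi(I)=I$. The key observation is the identity $\lambda X = (\lambda I)\circ X$, which yields
\[
\phi(\lambda X)=\phi\big((\lambda I)\circ X\big)=Z_\lambda\circ\phi(X),\qquad\text{for all }\lambda\in\F,\ X\in M_n,
\]
so that establishing \eqref{eq:homogeneity} amounts \emph{exactly} to proving $Z_\lambda=\omega(\lambda)I$ for a suitable scalar $\omega(\lambda)$.

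Next I would analyze $Z_\lambda$ through rank-one idempotents. For a rank-one idempotent $P$, writing $Q:=\phi(P)$, the relation $\phi(\lambda P)\circ Q=\phi(\lambda P)$ together with Lemma~\ref{le:Jordan product calculations}(b) places $\phi(\lambda P)$ in the corner $QM_nQ$; since $Q$ is rank one by Lemma~\ref{le:basic properties II}(e), this corner is one-dimensional, so $\phi(\lambda P)=\omega_P(\lambda)\,\phi(P)$ for a unique scalar $\omega_P(\lambda)$, and $(\lambda P)\circ(\mu P)=\lambda\mu P$ shows $\omega_P$ is multiplicative with $\omega_P(1)=1$. Applying Lemma~\ref{le:basic properties II}(h) to $I=\sum_j E_{jj}$ then gives $Z_\lambda=\sum_j\omega_{E_{jj}}(\lambda)\,Q_j$, where $Q_j:=\phi(E_{jj})$ are orthogonal rank-one idempotents summing to $\phi(I)=I$.

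The heart of the argument, and the place where $n\ge 2$ is indispensable, is to show $\omega_{E_{ii}}=\omega_{E_{jj}}$ for all $i,j$, so that $Z_\lambda$ collapses to $\omega(\lambda)I$. Here I would use the symmetry $S:=E_{ij}+E_{ji}$. A direct check gives $S\circ S=E_{ii}+E_{jj}$, $\ S\circ E_{ii}=S\circ E_{jj}=\tfrac12 S$, and $S\circ E_{kk}=0$ for $k\notin\{i,j\}$; applying $\phi$ and Lemma~\ref{le:Jordan product calculations} shows that $\phi(S)$ lies in the corner of $Q:=Q_i+Q_j$ (isomorphic to $M_2$), anticommutes with $Q_i-Q_j$, and satisfies $\phi(S)^2=Q$. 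In the $2\times 2$ corner this forces $\phi(S)$ to be a nonzero, purely off-diagonal element. Combining this with the orthoadditive formula $\phi(\lambda E_{ii}+\mu E_{jj})=\omega_{E_{ii}}(\lambda)Q_i+\omega_{E_{jj}}(\mu)Q_j$ and the computation $(\lambda E_{ii}+\mu E_{jj})\circ S=\tfrac{\lambda+\mu}{2}S$ gives
\[
\phi\!\left(\tfrac{\lambda+\mu}{2}S\right)=\frac{\omega_{E_{ii}}(\lambda)+\omega_{E_{jj}}(\mu)}{2}\,\phi(S).
\]
Since the left side depends only on $\lambda+\mu$ and $\phi(S)\neq 0$, comparing the splittings $(\lambda,\mu)=(\nu,\nu),(2\nu,0),(0,2\nu)$ yields $\omega_{E_{ii}}(2\nu)=\omega_{E_{jj}}(2\nu)$ for all $\nu$; because $\chr(\F)\neq 2$ makes $2$ invertible, this forces $\omega_{E_{ii}}=\omega_{E_{jj}}$. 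I expect this extraction of independence from a single off-diagonal ``rotation'' to be the main obstacle, and it is precisely where characteristic $2$ would break the argument.

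Once all $\omega_{E_{jj}}$ equal a common multiplicative map $\omega$, we get $Z_\lambda=\omega(\lambda)\sum_j Q_j=\omega(\lambda)I$, hence \eqref{eq:homogeneity}, and uniqueness of $\omega$ follows by evaluating at any $X$ with $\phi(X)\neq 0$ (such $X$ exists by Lemma~\ref{le:zero map}). For the final claim that $\omega$ is a ring monomorphism when $n\ge 2$, I would feed the single $\omega$ back into the displayed identity: homogeneity rewrites its left side as $\omega\!\big(\tfrac{\lambda+\mu}{2}\big)\phi(S)$, and since the same splitting computation gives $\omega(2\nu)=2\omega(\nu)$ (whence $\omega(\tfrac12)=\tfrac12$), I conclude $\omega(\lambda+\mu)=\omega(\lambda)+\omega(\mu)$. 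Thus $\omega$ is additive and multiplicative with $\omega(1)=1$, i.e.\ a nonzero ring endomorphism of the field $\F$, which is automatically injective; in particular $\omega$ fixes the prime subfield $\K$ pointwise, giving the $\K$-homogeneity of $\phi$.
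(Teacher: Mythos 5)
Your proof is correct, and it reaches the lemma by a genuinely different route than the paper, although both share the same skeleton: attach a scalar-valued map to each diagonal rank-one idempotent, show these maps coincide using off-diagonal elements, and assemble homogeneity via the orthoadditivity statements (g), (h) of Lemma \ref{le:basic properties II}. The paper's execution is coordinate-based: it first conjugates $\phi$ so that $\phi(E_{jj})=E_{jj}$ for all $j$ (see \eqref{eq:phi fiksira dijagonalne}), proves the support-preservation property \eqref{eq:preserves support}, deduces the dichotomy $\phi(\lambda E_{ij})\propto E_{ij}$ or $E_{ji}$ of \eqref{eq:Eij or Eji}, matches $\omega_i$ with $\omega_j$ by evaluating $\phi$ on $E_{ij}\circ(2\lambda E_{ii})=\lambda E_{ij}=E_{ij}\circ(2\lambda E_{jj})$, and finally obtains additivity from an explicit computation of the images of the idempotents $E_{ii}+xE_{ij}$ and $E_{jj}+yE_{ij}$. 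You instead work coordinate-free: your scalars $\omega_P$ come from the one-dimensionality of the corner $QM_nQ$ for rank-one $Q=\phi(P)$ (via Lemma \ref{le:Jordan product calculations}(b)), and the off-diagonal element you use is the symmetric $S=E_{ij}+E_{ji}$, whose image is pinned down inside the $M_2$-corner of $Q_i+Q_j$ by anticommutation with $Q_i-Q_j$ together with $\phi(S)^2=Q_i+Q_j$ --- so no normalization by conjugation and no case split on the direction of $\phi(E_{ij})$ is needed. A further economy is that your single identity $\phi\bigl(\tfrac{\lambda+\mu}{2}S\bigr)=\tfrac12\bigl(\omega_{E_{ii}}(\lambda)+\omega_{E_{jj}}(\mu)\bigr)\phi(S)$ does double duty: the splittings $(\nu,\nu)$, $(2\nu,0)$, $(0,2\nu)$ give $\omega_{E_{ii}}=\omega_{E_{jj}}$, and, once homogeneity is in hand, the same identity yields additivity of $\omega$, where the paper runs a separate idempotent computation. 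What the paper's heavier setup buys is reuse: \eqref{eq:phi fiksira dijagonalne}, \eqref{eq:preserves support} and \eqref{eq:Eij or Eji} are all invoked again in the proof of Theorem \ref{thm:main}, so that normalization work is not wasted there, whereas your argument, while cleaner and more self-contained as a proof of the lemma itself, leaves that infrastructure still to be built for the classification.
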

	\begin{proof}
		In view of Lemmas \ref{le:zero map} and \ref{le:basic properties II} (c) and (e), $\phi(E_{11}), \ldots ,\phi(E_{nn})$ are mutually orthogonal rank-one idempotents and therefore can be simultaneously diagonalized. Hence, by passing to map $T^{-1}\phi(\cdot)T$, for a suitable invertible matrix $T \in M_n$, without loss of generality we can assume that 
		\begin{equation}\label{eq:phi fiksira dijagonalne}
			\phi(E_{jj})  = E_{jj} \quad \text{ for all } j \in [n].
		\end{equation}
		Obviously $\phi(\F^\times E_{jj}) \ne \{0\}$ (again by Lemma \ref{le:zero map}), for all $j \in [n]$. Note that for each $X \in M_n$ and $S \subseteq [n]$ we have
		\begin{equation}\label{eq:preserves support}
			\supp X \subseteq S \times S \implies \supp \phi(X) \subseteq S \times S.
		\end{equation}
		Indeed, denote the diagonal idempotent as
		$P := \sum_{j \in [n]\setminus S} E_{jj}$ and note that a matrix $X\in M_n$ is supported in $S \times S$ if and only if $XP=PX=0$. In that case, obviously  $X \circ P = 0$, so 
		$$0 = \phi(X \circ P) = \phi(X) \circ \phi(P) \stackrel{\text{Lemma }\ref{le:basic properties II} (g),\, \eqref{eq:phi fiksira dijagonalne}}= \phi(X) \circ P$$
		and hence Lemma \ref{le:Jordan product calculations} (a) implies the claim.
		\smallskip
		
		Let $j \in [n]$ and $\lambda \in \F^\times$. Then
		$$\phi(\lambda E_{jj}) = \phi((\lambda E_{jj}) \circ E_{jj}) = \phi(\lambda E_{jj}) \circ E_{jj}.$$
		In view of Lemma \ref{le:Jordan product calculations} (b) we have $$\phi(\lambda E_{jj}) = E_{jj}\phi(\lambda E_{jj})E_{jj} = \phi(\lambda E_{jj})_{jj} E_{jj}.$$
		Since $\phi(0) = 0$, it follows that there exists a unique map $\omega_j : \F \to \F$ such that $$\phi(\lambda E_{jj}) = \omega_j(\lambda)\phi(E_{jj}), \quad \text{ for all } \lambda \in \F.$$
		Fix distinct $i,j \in [n]$. For $\lambda \in \F^\times$ by \eqref{eq:phi fiksira dijagonalne} we have
		\begin{align*}
			\omega_i(2\lambda) \phi(E_{ij}) \circ E_{ii} &= \phi(E_{ij} \circ (2\lambda E_{ii}))  = \phi(\lambda E_{ij}) = \phi(E_{ij} \circ (2\lambda E_{jj}))\\
			&= \omega_j(2\lambda) \phi(E_{ij}) \circ E_{jj}.
		\end{align*}
		Note that \eqref{eq:preserves support} implies that $\supp \phi(\lambda E_{ij}) \subseteq \{i,j\}\times\{i,j\}$. By $\phi(\lambda E_{ij})^2 = 0$ and Lemma \ref{le:zero map} it follows that
		\begin{equation}\label{eq:Eij or Eji}
			\phi(\lambda E_{ij}) \propto E_{ij} \text{ or } E_{ji}.
		\end{equation}
		Returning to the previous equation, it follows that $\omega_i(2\lambda) = \omega_j(2\lambda)$. We conclude $\omega_i = \omega_j$ so there exists a unique globally defined map $\omega : \F \to \F$ such that
		$$\phi(\lambda E_{jj}) = \omega(\lambda)E_{jj}, \quad \text{ for all } \lambda \in \F, j \in [n].$$
		Now we prove \eqref {eq:homogeneity}. For $\lambda \in \F$ we have \begin{align*}
			\phi(\lambda I)&= \phi\left(\sum_{j \in [n]} \lambda E_{jj} \right)\stackrel{\text{Lemma } \ref{le:basic properties II} \text{ (h)}}= \sum_{j \in [n]} \phi(\lambda E_{jj})  =\sum_{j \in [n]}\omega(\lambda) \phi(E_{jj})\\
			&\leftstackrel{\eqref{eq:phi fiksira dijagonalne}}= \omega(\lambda) I.
		\end{align*}
		Now, for arbitrary $X \in M_n$ and $\lambda \in \F$ we have
		$$\phi(\lambda X) = \phi(X \circ (\lambda I)) = \phi(X) \circ \phi(\lambda I) = \omega(\lambda) \phi(X).$$
		For some $i \in [n]$ and $\lambda, \mu \in \F$ (again using \eqref{eq:phi fiksira dijagonalne}) we have
		\begin{align*}
			\omega(\lambda\mu)E_{ii} &= \phi((\lambda\mu) E_{ii}) = \phi((\lambda E_{ii}) \circ (\mu E_{ii})) = \phi(\lambda E_{ii}) \circ \phi(\mu E_{ii})\\
			&= \omega(\lambda)\omega(\mu)E_{ii},
		\end{align*}
		which implies $\omega(\lambda\mu) = \omega(\lambda)\omega(\mu)$, so $\omega$ is a multiplicative map.
		
		Assume now that $n \ge 2$. The argument that $\omega$ is additive is similar to the proof of \cite[Theorem~3.1, Claim~5]{GogicTomasevic2}. For completeness, we include the details. Let $i,j \in [n]$ be distinct. For fixed $x,y \in \F$ consider the idempotents
		$$E_{ii} + x E_{ij}, E_{jj} + y E_{ij} \in \idem(M_n).$$
		By \eqref{eq:preserves support} we see that
		$$\supp \phi(E_{ii} + x E_{ij}) \subseteq \{i,j\} \times \{i,j\}.$$
		Denote $$\phi (E_{ii} + x E_{ij}) = \sum_{(r,s) \in \{i,j\} \times \{i,j\}}\alpha_{rs} E_{rs}, \quad \alpha_{rs} \in \F.$$
		From now on, in view of \eqref{eq:Eij or Eji} assume that $\phi(E_{ij}) = \beta E_{ij}$ for some $\beta \in \F^\times$ as the other case (i.e.\ $\phi(E_{ij}) = \beta E_{ji}$) is similar. We have
		\begin{align*}
			\omega\left(\frac12 x\right) \beta E_{ij} \quad &= \phi\left(\frac12 x E_{ij}\right) = \phi((E_{ii} + x E_{ij}) \circ E_{jj})\stackrel{\eqref{eq:phi fiksira dijagonalne}}= \phi(E_{ii} + x E_{ij}) \circ E_{jj} \\
			&= 
			\frac12 \alpha_{ij}E_{ij} + \frac12 \alpha_{ji}E_{ji} + \alpha_{jj} E_{jj}.
		\end{align*}
		Since $\phi(E_{ii} + x E_{ij})$ is an idempotent and $\omega^{-1}(\{0\}) = \{0\}$, we conclude
		$$\alpha_{ij} =
		2 \omega\left(\frac12 x\right)  \beta, \qquad \alpha_{ji} = \alpha_{jj} = 0, \qquad \alpha_{ii} = 1.$$
		Hence
		$$\phi(E_{ii} + x E_{ij}) = E_{ii} + 2 \omega\left(\frac12 x\right) \beta E_{ij}.$$
		In an analogous way we arrive at the equality
		$$\phi(E_{jj} + y E_{ij}) = 
		E_{jj} + 2 \omega\left(\frac12 y\right)  \beta E_{ij}.$$
		We have
		\begin{align*}
			\omega\left(\frac{x+y}2\right) \beta E_{ij}\quad &= \phi\left(\frac{x+y}2 E_{ij}\right) = \phi((E_{ii} + x E_{ij}) \circ (E_{jj} + y E_{ij})) \\
			&= \phi(E_{ii} + x E_{ij}) \circ \phi(E_{jj} + y E_{ij}) \\
			&= \left(E_{ii} + 2 \omega\left(\frac12 x\right) \beta E_{ij}\right)  \circ \left(E_{jj} + 2 \omega\left(\frac12 y\right) \beta E_{ij}\right)\\
			&= \left(\omega\left(\frac12 x\right) + \omega\left(\frac12 y\right)\right)\beta E_{ij}
		\end{align*}
		and hence
		$$\omega\left(\frac{x+y}2\right) = \omega\left(\frac12 x\right) + \omega\left(\frac12 y\right).$$
		Since $x,y \in \F$ were arbitrary, this concludes the proof.
	\end{proof}
	The proof of the next lemma follows exactly the same lines as the proof of \cite[Theorem~3.1, Claim~8]{GogicTomasevic2}, so we omit it.
	\begin{lemma}\label{le:preserves triple product}
		Let $\phi : M_n \to M_n, n \ge 2$, be a $\circ$-multiplicative map such that $\phi(0) = 0$. Then
		$$\phi(PXP) = \phi(P)\phi(X)\phi(P), \quad \text{ for all }X \in M_n, P \in \idem(M_n).$$
	\end{lemma}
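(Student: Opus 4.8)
My backbone would be the linearized Jordan identity
\[
PXP \;=\; 2\,P\circ(P\circ X)\;-\;P\circ X ,\qquad X\in M_n,\ P\in\idem(M_n),
\]
verified by a one-line Peirce computation: relative to $P$ one writes $X=PXP+F+P^\perp XP^\perp$ with $F:=PXP^\perp+P^\perp XP$, and $P\circ(\cdot)$ fixes $PXP$, halves $F$, and annihilates $P^\perp XP^\perp$, so the right-hand side collapses to $PXP$. Setting $Q:=\phi(P)$, I would first note that both sides of the assertion lie in the corner $QM_nQ$: from $PXP\circ P=PXP$ and $\circ$-multiplicativity we get $\phi(PXP)\circ Q=\phi(PXP)$, hence $\phi(PXP)\in QM_nQ$ by Lemma~\ref{le:Jordan product calculations}(b), while $Q\phi(X)Q\in QM_nQ$ is clear.

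Applying $\phi$ to the two Jordan products, and using $\circ$-multiplicativity together with the $\K$-homogeneity of Lemma~\ref{le:h exists} to pass the scalar $2$ through $\phi$, I would obtain $\phi(2\,P\circ(P\circ X))=2\,Q\circ(Q\circ\phi(X))$ and $\phi(P\circ X)=Q\circ\phi(X)$; the same identity applied to the pair $Q,\phi(X)$ then rearranges these into $Q\phi(X)Q$. Thus the lemma reduces to the single distributivity statement
\[
\phi\bigl(2\,P\circ(P\circ X)-P\circ X\bigr)=\phi\bigl(2\,P\circ(P\circ X)\bigr)-\phi(P\circ X).
\]
This is exactly where the lack of additivity bites: the elements $2PXP+\tfrac12F$ and $PXP+\tfrac12F$ being combined share the same off-diagonal part $\tfrac12F$, so I need genuine local additivity along the Peirce decomposition, not merely on orthogonal idempotents.

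The engine for that would be the identity, valid for every $R\in M_n$,
\[
\phi(R)=\phi(R\circ P)+\phi(R\circ P^\perp),
\]
which holds because $\phi(R)\circ Q=\phi(R\circ P)$, $\phi(R)\circ Q^\perp=\phi(R\circ P^\perp)$ (using $\phi(P^\perp)=Q^\perp$ from Lemma~\ref{le:basic properties II}(f)), and these sum to $\phi(R)\circ(Q+Q^\perp)=\phi(R)$. Taking $R=U+V$ with $U\in PM_nP$, $V\in P^\perp M_nP^\perp$ gives at once the \emph{corner additivity} $\phi(U+V)=\phi(U)+\phi(V)$, and taking $R=Z+tF$ with $Z\in PM_nP$ and $F$ in the half Peirce space $PM_nP^\perp\oplus P^\perp M_nP$ gives the recursion $\Phi(t)=\Phi(t/2)+\omega(t/2)\phi(F)$ for $\Phi(t):=\phi(Z+tF)$, where $\omega(2^{-k})=2^{-k}$ since $\omega$ fixes $\K$.

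The main obstacle is to close this recursion: telescoping alone pins $\Phi(1)$ down only up to the limit $t\to0$, which is unavailable over an arbitrary field. The plan here is to seed it by a direct matrix-unit computation, exactly in the spirit of Lemma~\ref{le:h exists} where $\phi(E_{ii}+xE_{ij})$ was determined explicitly. Concretely, I would exploit that $P+G$ is a genuine idempotent for every $G\in PM_nP^\perp$, that the off-diagonal blocks are square-zero, and that $M_n$ is finite dimensional, to evaluate $\phi$ on one-sided perturbations $Z+G$ directly; feeding this into the recursion and corner additivity yields full additivity of $\phi$ across the three Peirce summands relative to $P$. Since the image of $F$ is purely off-diagonal relative to $Q$ and the image of $P^\perp XP^\perp$ lies in $Q^\perp M_nQ^\perp$, compression by $Q$ annihilates both cross terms and leaves precisely $\phi(PXP)=Q\phi(X)Q$.
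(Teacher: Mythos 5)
Your preparatory material is correct and genuinely useful: the Peirce identity $PXP=2\,P\circ(P\circ X)-P\circ X$, the observation that both $\phi(PXP)$ and $Q\phi(X)Q$ lie in the corner $QM_nQ$, and above all the splitting $\phi(R)=\phi(R\circ P)+\phi(R\circ P^\perp)$, which uses only bilinearity of $\circ$ in the target algebra together with $\phi(P^\perp)=Q^\perp$ from Lemma~\ref{le:basic properties II}~(f), and which yields your corner additivity $\phi(U+V)=\phi(U)+\phi(V)$ for $U\in PM_nP$, $V\in P^\perp M_nP^\perp$. The problem is that the proof stops exactly where the lemma lives. The recursion $\Phi(t)=\Phi(t/2)+\omega(t/2)\phi(F)$ telescopes to $\Phi(1)=\Phi(2^{-m})+(1-2^{-m})\phi(F)$, which, as you admit, never closes in characteristic $0$, while in odd characteristic $p$ it closes vacuously (take $m$ to be the multiplicative order of $2$ modulo $p$ and the identity becomes trivial). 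Your proposed remedy --- ``evaluate $\phi$ on one-sided perturbations $Z+G$ directly \dots\ feeding this into the recursion and corner additivity yields full additivity of $\phi$ across the three Peirce summands'' --- is a plan, not an argument. The idempotent trick you invoke ($P+G\in\idem(M_n)$ for $G\in PM_nP^\perp$) addresses only the case $Z=P$; even there, concluding $\phi(P+G)=Q+\phi(G)$ requires a rank argument (the Peirce constraints forced on the idempotent $\phi(P+G)$ show its rank equals the rank of its $Q$-corner, which by Lemma~\ref{le:basic properties II}~(e) must then be a full-rank idempotent of $QM_nQ$, i.e.\ $Q$ itself) that you do not supply. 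For a general $Z\in PM_nP$ the matrix $Z+G$ is \emph{not} an idempotent and no method of evaluating $\phi$ on it is offered. Since ``full additivity across the Peirce summands'' is essentially equivalent to the statement being proved, asserting it without proof is a genuine gap, not a deferred routine verification.

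The gap can in fact be closed with tools you already assembled, but by a different device that avoids the recursion entirely (the case $\phi\equiv 0$ being trivial, assume $\phi\neq 0$). Consider the symmetry $S:=2P-I=P-P^\perp$. By Lemma~\ref{le:basic properties II}~(h) applied to the orthogonal idempotents $P,P^\perp$ with scalars $1,-1$, together with Lemma~\ref{le:h exists} (so that $\phi(-P^\perp)=\omega(-1)\phi(P^\perp)=-Q^\perp$) and Lemma~\ref{le:basic properties II}~(f), one gets $\phi(S)=Q-Q^\perp=:T$. Using $S^2=I$, a one-line computation gives
\[
(S\circ X)\circ S=\tfrac12\left(SXS+X\right)=PXP+P^\perp XP^\perp,
\]
and likewise $(T\circ Y)\circ T=QYQ+Q^\perp YQ^\perp$ for $Y:=\phi(X)$. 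Applying $\circ$-multiplicativity to the left identity yields
\[
\phi\left(PXP+P^\perp XP^\perp\right)=(T\circ Y)\circ T=QYQ+Q^\perp YQ^\perp .
\]
Now your own corner additivity splits the left-hand side as $\phi(PXP)+\phi(P^\perp XP^\perp)$, and since $\phi(PXP)\in QM_nQ$ and $\phi(P^\perp XP^\perp)\in Q^\perp M_nQ^\perp$ (by Lemma~\ref{le:Jordan product calculations}~(b), as you noted), compressing both sides by $Q$ gives $\phi(PXP)=QYQ=\phi(P)\phi(X)\phi(P)$. No seeding, no limit, and no additivity beyond what you had already established is needed; the missing idea is precisely the use of the symmetry $2P-I$, whose image under $\phi$ is computable by orthoadditivity, rather than perturbations $Z+G$, whose images are not.
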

	
	\begin{proof}[Proof of Theorem \ref{thm:main}]
		First, as noted in the introduction (following the beginning of the proof of \cite[Theorem~3.1]{GogicTomasevic2}), it suffices to prove Theorem \ref{thm:main} for $\circ$-preserving maps, since the transformation \eqref{eq:supsitution Jmultiplicative} allows us to extend the result to $\diamond$-preserving maps. Therefore, assume that $\phi : M_n \to M_n$, $n \ge 2$, is $\circ$-multiplicative. 
		
		\smallskip
		
		Suppose that $\phi$ is not the zero map. Since $\phi(0)$ is an idempotent, without loss of generality we can assume that
		$$\phi(0) = \begin{bmatrix}
			I_r & 0 \\ 0 & 0
		\end{bmatrix}, \quad \text{ for some }0 \le r \le n.$$
		Assume that $r \ge 1$. Then we claim that $\phi$ is the constant map equal to $\phi(0)$. Indeed, first note that for all $X \in M_n$ we have
		$$\phi(0) = \phi(X \circ 0) = \phi(X) \circ \phi(0)$$
		which easily implies that 
		\begin{equation}\label{eq:X to Y}
			\phi(X) = \begin{bmatrix}
				I_r & 0 \\ 0 & \psi(X)
			\end{bmatrix},
		\end{equation}
		for some uniquely determined matrix $\psi(X) \in M_{n-r}$. In particular, if $r = n$, it follows that $\phi$ is the constant map globally equal to $I$. Otherwise, it makes sense to consider the map $\psi : M_n \to M_{n-r}$ defined by \eqref{eq:X to Y}, which is again  $\circ$-multiplicative, and satisfies $\psi(0) = 0$. Since $n-r < n$, Lemma \ref{le:basic properties II} (d) implies that $\psi$ must be the zero map, and therefore $\phi(X) = \phi(0)$ for all $X \in M_n$.
		
		Suppose now that $r = 0$, i.e.\ $\phi(0) = 0$. We claim that $\phi$ takes the form given in \eqref{eq:form of phi}, and as a result, it is a nonzero additive map. As in Lemma \ref{le:h exists} and its proof, without loss of generality we can assume that
		$$\phi(E_{jj}) = E_{jj}, \quad \text{ for all }j \in [n]$$
		and consequently, by Lemma \ref{le:basic properties II} (g),
		\begin{equation}\label{eq:identity on diagonal idempotents}
			\phi(P) = P, \quad\text{ for all } P \in \idem(M_n) \cap \ca{D}_n. 
		\end{equation}
		As in \eqref{eq:Eij or Eji} for $\lambda= 1$, under these assumptions we also obtain
		$$\phi(E_{ij}) \propto E_{ij} \text{ or }E_{ji}.$$
		We claim that the same option holds throughout.  We follow a similar approach as outlined in the proof of \cite[Theorem~3.1, Claim 4]{GogicTomasevic2}. For completeness we include the details. Consider the set
		$$\ca{S} := \{(r,s)\in [n]^2 \setminus \Delta_n : \phi(E_{rs}) \propto E_{rs}\}.$$  For the sake of concreteness, assume that $(i,j) \in \ca{S}$. In that case clearly $\phi(E_{ji}) \propto E_{ji}$, as otherwise
		$$\phi\left( \frac12 \left(E_{ii}+E_{jj}\right)\right) = \phi(E_{ij} \circ E_{ji}) = \phi(E_{ij}) \circ \phi(E_{ji}) \propto E_{ij} \circ E_{ij} = 0,$$
		which is a contradiction to Lemma \ref{le:zero map}, so $(j,i) \in \ca{S}$. The next objective is to show that $(i,k)\in\ca{S}$ for any $k \in [n]\setminus\{i\}$, and $(l,j)\in \ca{S}$ for any $l \in [n]\setminus\{j\}$.
		\begin{itemize}
			\item[--] Assume that $k \in [n]\setminus\{i,j\}$ and that $\phi(E_{ik}) \propto E_{ki}$. Then 
			$$0 = \phi(E_{ij} \circ E_{ik}) = \phi(E_{ij}) \circ \phi(E_{ik}) \propto E_{ij} \circ E_{ki} = \frac12 E_{kj},$$
			which is a contradiction, so it must be $\phi(E_{ik})  \propto E_{ik}$.
			\item[--] Assume that $l \in [n]\setminus\{i,j\}$ and that $\phi(E_{lj}) \propto E_{jl}$. Then 
			$$0 = \phi(E_{ij} \circ E_{lj}) = \phi(E_{ij}) \circ \phi(E_{lj}) \propto E_{ij} \circ E_{jl} = \frac12 E_{il},$$
			which is a contradiction, so that $\phi(E_{lj})  \propto E_{lj}$.
		\end{itemize}
		By Lemma \ref{le:contains entire CxC} it follows that $\ca{S} = [n]^2 \setminus \Delta_n$, so there exists a map $g : [n]^2 \to \F^\times$ such that
		$$\phi(E_{ij}) = g(i,j) E_{ij}, \quad \text{ for all }i,j \in [n].$$ We claim that the map $g$ is transitive in the sense of \cite{Coelho}, i.e.\ it satisfies
		$$g(i,j)g(j,k) = g(i,k), \quad \text{ for all }(i,j), (j,k) \in [n]^2.$$ Fix $(i,j), (j,k) \in [n]^2$. If $i \ne k$, then
		\begin{align*}
			\frac12 g(i,k)E_{ik} &\stackrel{\text{Lemma }\ref{le:h exists}}= \phi\left(\frac12 E_{ik}\right) =\phi(E_{ij} \circ E_{jk}) = \phi(E_{ij}) \circ \phi(E_{jk}) \\
			&= \frac12 g(i,j)g(j,k)E_{ik},
		\end{align*}
		which implies $g(i,k) = g(i,j)g(j,k)$.
		On the other hand, if $i = k$, then
		\begin{align*}
			\frac{1}{2}\left(E_{ii} + E_{jj}\right) \qquad &\leftstackrel{\text{Lemma }\ref{le:h exists}, \eqref{eq:identity on diagonal idempotents}}= \phi\left(\frac{1}{2}\left(E_{ii} + E_{jj}\right)\right) =\phi(E_{ij} \circ E_{ji}) = \phi(E_{ij}) \circ \phi(E_{ji}) \\
			&= \frac12 g(i,j)g(j,i)(E_{ii} + E_{jj})
		\end{align*}
		which implies $g(i,i) = 1 = g(i,j)g(j,i)$. Following \cite{Coelho}, denote by $$g^* : M_n \to M_n, \qquad g^*(E_{ij}) := g(i,j)E_{ij}$$
		the induced (algebra) automorphism of $M_n$. Since every automorphism of $M_n$ is inner (see e.g.\ \cite[Theorem~1.30]{Bresar-book}), by passing to the map $X \mapsto (g^*)^{-1}(\phi(X))$, without loss of generality we can assume that 
		$$\phi(E_{ij}) = E_{ij}, \quad \text{ for all }  (i,j) \in [n]^2.$$
		In view of Lemma \ref{le:h exists}, denote by $\omega :\F \to \F$ the induced ring monomorphism that satisfies \eqref{eq:homogeneity}. We claim that 
		$$\phi(X) = \omega(X), \quad \text{ for all } X \in M_n.$$
		Fix $(i,j) \in [n]^2$. If $i = j$, we have
		\begin{align*}
			\omega(X_{ii})E_{ii} = \phi(X_{ii}E_{ii}) = \phi(E_{ii}XE_{ii}) \stackrel{\text{Lemma }\ref{le:preserves triple product}}= E_{ii}\phi(X)E_{ii} = \phi(X)_{ii}E_{ii},
		\end{align*}
		so $\phi(X)_{ii} = \omega(X_{ii})$. Now assume $i \ne j$. Since $\omega$ is multiplicative and acts as the identity on the prime subfield $\K \subseteq \F$, we obtain
		\begin{align*}
			\frac12 \omega(X_{ij})E_{ji} &= \phi\left(\frac12 X_{ij} E_{ji}\right) = \phi\left(\frac12 E_{ji}XE_{ji}\right) = \phi((E_{ji} \circ X ) \circ E_{ji})\\
			&= (\phi(E_{ji}) \circ \phi(X)) \circ \phi(E_{ji}) = (E_{ji} \circ \phi(X)) \circ E_{ji}\\
			&= \frac12 \phi(X)_{ij}E_{ji}.
		\end{align*}
		This implies $\phi(X)_{ij} = \omega(X_{ij})$, which completes the proof of the theorem.
	\end{proof}
	It is also worth noting that the first part of the proof of Theorem \ref{thm:main} immediately yields  the following corollary:
	\begin{corollary} 
		Let $m < n$. The map $\phi : M_n \to M_m$ is Jordan multiplicative if and only if it is constant and equal to a fixed idempotent. 
	\end{corollary}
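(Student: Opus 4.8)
The plan is to read the corollary off the opening portion of the proof of Theorem~\ref{thm:main}, observing that the equality $m=n$ is invoked there only \emph{after} a constant map has already been produced. The backward implication is immediate: if $\phi \equiv P$ for a fixed $P \in \idem(M_m)$, then $\phi(X \circ Y) = P = P \circ P = \phi(X) \circ \phi(Y)$ for all $X,Y \in M_n$ (using $P \circ P = P^2 = P$), so $\phi$ is $\circ$-multiplicative; the $\diamond$-preserving version follows via the substitution \eqref{eq:supsitution Jmultiplicative}, exactly as in the proof of the theorem.

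For the forward implication, let $\phi : M_n \to M_m$ be $\circ$-multiplicative. By Lemma~\ref{le:basic properties II}~(a), $\phi(0)$ is an idempotent of $M_m$, so after conjugating the whole map by a suitable invertible matrix in $M_m$ I would assume $\phi(0) = \diag(I_r, 0)$ for some $0 \le r \le m$. The first step is the peeling-off argument from the proof of Theorem~\ref{thm:main}: applying $\phi$ to $X \circ 0 = 0$ gives $\phi(0) = \phi(X) \circ \phi(0)$ for every $X$, and a direct block computation then forces
$$\phi(X) = \begin{bmatrix} I_r & 0 \\ 0 & \psi(X) \end{bmatrix}$$
for a uniquely determined $\psi(X) \in M_{m-r}$. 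Comparing bottom-right blocks shows that the induced map $\psi : M_n \to M_{m-r}$ is again $\circ$-multiplicative and satisfies $\psi(0) = 0$.

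The decisive step is a pure dimension count. Since $m < n$, we have $m-r \le m < n$, so the codomain $M_{m-r}$ of $\psi$ has size strictly below $n$. If $\psi$ were nonzero, then Lemma~\ref{le:basic properties II}~(d), applied to $\psi$ (which vanishes at $0$), would yield $m-r \ge n$ --- a contradiction. Hence $\psi \equiv 0$, and therefore $\phi(X) = \phi(0)$ for all $X$; that is, $\phi$ is constant and equal to the idempotent $\phi(0)$. This single argument also subsumes the extremes: the case $r=0$ forces $\phi$ itself to be the zero map (constant, equal to the idempotent $0$), while $r=m$ makes $\psi$ trivial outright.

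I do not expect a genuine obstacle: the dimension bound that rules out nonconstant behavior is automatic as soon as $m<n$, and all the structural content is already packaged in Lemma~\ref{le:basic properties II}. The only points demanding a little care are the block computation yielding the form of $\phi(X)$ and the verification that the peeled map $\psi$ inherits $\circ$-multiplicativity together with $\psi(0)=0$, both of which are the routine bookkeeping already carried out in the main proof.
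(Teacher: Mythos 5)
Your proof is correct and is exactly the paper's intended argument: the paper derives this corollary by noting that the first part of the proof of Theorem~\ref{thm:main} (the peeling-off of $\phi(0)=\diag(I_r,0)$ to produce a $\circ$-multiplicative map $\psi:M_n\to M_{m-r}$ with $\psi(0)=0$, killed by the rank inequality in Lemma~\ref{le:basic properties II}~(d)) applies verbatim when the codomain is $M_m$ with $m<n$. Your treatment of the edge cases $r=0$ and $r=m$ and of the trivial backward implication matches what the paper leaves implicit.
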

	In contrast to the matrix algebra $M_n$, the next two examples illustrate that non-constant Jordan multiplicative maps defined on general central SMAs (i.e.\ those with a trivial centre), or on the $C^*$-algebra $\BB(H)$ of bounded linear operators on an infinite-dimensional Hilbert space $H$, are no longer automatically additive.  
	\begin{example}\label{ex:notSMA}
		Let $\ca{T}_n \subseteq M_n$ be the upper-triangular subalgebra of $M_n$. Choose an arbitrary non-additive multiplicative map $\omega : \F \to \F$ (e.g.\ $\omega(x):=x^2$) and define a map 
		$$
		\phi: \ca{T}_n \to \ca{T}_n, \qquad \begin{bmatrix} x_{11} & x_{12} & \cdots & x_{1n} \\ 0 & x_{22} & \cdots & x_{2n} \\
			\vdots & \vdots & \ddots & \vdots \\
			0 & 0 & \cdots & x_{nn} \end{bmatrix} \mapsto \begin{bmatrix} \omega(x_{11}) & 0 & \cdots & 0 \\
			0 & \omega(x_{22}) & \cdots & 0 \\
			\vdots & \vdots & \ddots & \vdots \\
			0 & 0 & \cdots &\omega(x_{nn}) \end{bmatrix}.
		$$
		Then $\phi$ is clearly $\circ$-multiplicative, but is neither constant nor additive.  
	\end{example}
	
	\begin{example}\label{ex:notB(H)}
		Let $H$ be an infinite-dimensional Hilbert space. In view of the identification $H \cong H \oplus H$, for any fixed nonzero idempotent $P \in \BB(H)$, the map
		$$\phi : \BB(H) \to \BB(H \oplus H), \qquad X \mapsto \begin{bmatrix}
			X & 0 \\ 0 & P
		\end{bmatrix}$$
		is $\circ$-multiplicative, but is neither constant nor additive.
	\end{example}
	Finally, the next simple example  demonstrates that Theorem \ref{thm:main} does not extend to $\diamond$-multiplicative maps over fields $\F$ of characteristic two.
	\begin{example}\label{ex:notchar2}
		Assume that $\chr(\F)=2$ and $n \ge 2$. Fix an arbitrary trace-one matrix $A \in M_n(\F)$ and define a map $\phi : M_n(\F) \to M_n(\F)$ that sends $A$ to a fixed nonzero matrix and all other matrices to zero. As the trace of any matrix in $M_n(\F)$ of the form $X\diamond Y=XY+YX$ is zero, it follows that $\phi$ is $\diamond$-multiplicative, but is neither constant nor additive. 
	\end{example}

\end{document}